\documentclass[a4paper,10pt,leqno]{amsart}
\usepackage{amsmath}
\usepackage{amsfonts}
\usepackage{amssymb}
\usepackage[english]{babel}
\usepackage{graphicx}
\usepackage[normalem]{ulem}
\usepackage[makeroom]{cancel}
\usepackage{amsthm}
\usepackage{color}

\newtheorem{theorem}{\textbf{Theorem}}[section]
\newtheorem{lemma}[theorem]{\textbf{Lemma}}
\newtheorem{proposition}[theorem]{\textbf{Proposition}}

\newtheorem{rem}[theorem]{\textbf{Remark}}

\pagestyle{plain}

\title{Blow-up for semilinear parabolic equations\\ in cones of the hyperbolic space}

\date{\today}

\keywords{}

\subjclass[2020]{}

\begin{document}
\maketitle

\begin{center}
\textsc{\textmd{D. D. Monticelli\footnote{Politecnico di Milano, Italy. Email:
dario.monticelli@polimi.it.} and F. Punzo\footnote{Politecnico di Milano, Italy. Email: fabio.punzo@polimi.it.}}}
\end{center}

\begin{abstract} We investigate existence and nonexistence of global in time nonnegative solutions to the semilinear heat equation, with a reaction term of the type $e^{\mu t}u^p\, (\mu\in\mathbb{R}, p>1),$ posed on cones of the hyperbolic space. Under a certain assumption on $\mu$ and $p$, related to the bottom of the spectrum of $-\Delta$ in $\mathbb H^n$, we prove that any solution blows up in finite time, for any nontrivial nonnegative initial datum. Instead, if the parameters $\mu$ and $p$ satisfy the opposite condition we have: (a) blow-up when the initial datum is large enough, (b) existence of global solutions when the initial datum is small enough. Hence our conditions on the parameters $\mu$ and $p$ are optimal.

We see that blow-up and global existence do not depend on the amplitude of the cone. This is very different from what happens in the Euclidean setting (see \cite{BaLe}), and it is essentially due to a specific geometric feature of $\mathbb H^n$.
\end{abstract}

\maketitle


\section{Introduction}
Let $\mathbb H^n$ be the $n-$dimensional hyperbolic space, and let $r=r(x)$ denote the distance of a point $x\in\mathbb{H}^n$ from a fixed reference point $o$. Consider the unit sphere $\mathcal S^{n-1}\subset\mathbb{H}^n$ centered at $o$ 
and let $\Omega\subseteq\mathcal{S}^{n-1}$ be a relatively open domain. Let $\mathcal{C}$ be the cone with vertex at $o\in\mathbb{H}^n$ determined by the domain $\Omega$.
We introduce global polar coordinates $(r,\theta)\in [0,\infty)\times\mathbb{S}^{n-1}$ centered at $o$ in $\mathbb{H}^n$, where $\mathbb{S}^{n-1}$ denotes the unit sphere in $\mathbb{R}^n$. With a slight abuse of notation we will identify $\mathcal{S}^{n-1}\equiv\mathbb{S}^{n-1}$. Then the cone $\mathcal{C}$ can be described as follows
$$
\mathcal{C}=\{x=(r,\theta)\in\mathbb{H}^n\,|\,r>0,\,\theta\in\Omega\}=(0,\infty)\times\Omega.
$$

In this paper, we investigate finite-time blow-up of nonnegative solutions to problem
\begin{equation}\label{7}
\begin{cases}
u_t-\Delta u=e^{\mu t}u^p&\text{ in }\mathcal{C}\times(0,T),\\
u=0&\text{ in }\partial\mathcal{C}\times(0,T),\\
u=u_0&\text{ on }\mathcal{C}\times\{0\},
\end{cases}
\end{equation}
with $$p>1,\,\,\,\,\,\, T>0,\,\,\,\,\,\, \mu\in\mathbb{R},\,\,\,\,\,\, u_0\geq0\text{ continuous and bounded}.$$ Here and in the following $T\in(0,\infty]$ denotes the maximal existence time of the solution $u$, while $\Delta$ stands for the Laplace-Beltrami operator on $\mathbb H^n$. It is clear that, when we consider the case $\Omega=\mathbb S^{n-1}$, we have $\mathcal C=\mathbb H^n$ and no boundary conditions are given. Hence problem \eqref{7} in this case reads
\begin{equation}\label{7c}
\begin{cases}
u_t-\Delta u=e^{\mu t}u^p&\text{ in }\mathbb H^n\times(0,T),\\
u=u_0&\text{ on }\mathbb H^n\times\{0\}.
\end{cases}
\end{equation}

\smallskip

In \cite{BaLe}, the authors addressed problem
\begin{equation}\label{7b}
\begin{cases}
u_t-\Delta u=u^p&\text{ in }\mathfrak{D}\times(0,T),\\
u=0&\text{ in }\partial\mathfrak{D}\times(0,T),\\
u=u_0&\text{ on }\mathfrak{D}\times\{0\},
\end{cases}
\end{equation}
where $$\mathfrak D:=\{x=(r, \theta)\in \mathbb R^n: r>0, \theta\in \Omega\}$$
is a cone of $\mathbb R^n$, $(r, \theta)\in [0, +\infty)\times \mathbb S^{n-1}$ are polar coordinates in $\mathbb R^n$ and $\Omega$ is a relatively open domain in the unit sphere $\mathbb{S}^{n-1}\subset\mathbb{R}^n$. It is shown that existence and nonexistence of global solutions is closely related to the domain $\Omega$\,. To be more specific,
we introduce some preliminary material. Let $\Delta_\theta$ be the Laplace-Beltrami operator on $\mathbb{S}^{n-1}$, and consider the eigenvalue problem on $\Omega$
\begin{equation}\label{8}
\begin{cases}
-\Delta_\theta \psi=\omega\psi&\text{ in }\Omega,\\
\psi=0&\text{ on }\partial\Omega.
\end{cases}
\end{equation}
In particular we denote by $\omega_1\geq0$ the first eigenvalue of problem \eqref{8} and by $\psi_1$ the corresponding positive eigenfunction, normalized so that
$$
\|\psi_1\|_{L^1(\Omega)}=\int_\Omega \psi_1 d\sigma_\theta=1.
$$
Obviously $\omega_1=0$ if and only if $\Omega=\mathbb{S}^{n-1}$, that is $\mathcal C=\mathbb H^n$ and $\mathfrak D=\mathbb R^n$. In this case $\psi_1$ is a positive constant.

Let $\lambda:=-\gamma_-$, where $\gamma_-$ is the negative root of
\[\gamma(\gamma+n-2)=\omega_1\,.\]
In \cite{BaLe} it is shown that if $$1<p<1+\frac2{2+\lambda},$$ then any solution to problem \eqref{7b} blows up in finite time. On the other hand, problem \eqref{7b} possesses global in time solutions, for suitable initial data $u_0$, provided that
\[1+\frac2{\lambda}<p<\frac{n+1}{n-3} \quad \text{ for } n>3,\]
\[p>1+\frac2{\lambda} \quad \text{ for } n=2, 3\,.\]
Note that conditions on $p$ ensuring existence or nonexistence of global in time solutions of \eqref{7b} depend on $\Omega$ through the quantity $\lambda$.

Let us mention that when $\mathfrak D=\mathbb R^n$, problem \eqref{7b} was studied in \cite{Fuj} (see also \cite{Hay}). It is shown that any solution blows up in finite time, whenever
\[1<p\leq 1+\frac 2n,\]
while a global in time solution exists, provided that
\[p>1+\frac 2n\,,\]
and the initial datum is small enough.

In \cite{BaPoTe} the authors studied problem \eqref{7c}. We recall that $\lambda_1(\mathbb{H}^n)=\frac{(n-1)^2}{4}$ is the bottom of the spectrum of the operator $-\Delta$ on $\mathbb{H}^n$.
It is then established that if
\begin{equation}\label{13}
  \mu>(p-1)\lambda_1(\mathbb{H}^n),
  \end{equation}
then any solution blows up in finite time. On the other hand (see also \cite{WaYi}), if
\begin{equation}\label{10}
  \mu\leq(p-1)\lambda_1(\mathbb{H}^n),
\end{equation}
and $\mu\geq0$ then a global in time solution exists, provided that $u_0$ is sufficiently small.

\smallskip

In this paper we show that existence and nonexistence of global solutions to problem \eqref{7} on $\mathcal{C}\subseteq\mathbb{H}^n$ do not depend on the domain $\Omega\subseteq\mathbb{S}^{n-1}$, and that the critical exponent phenomenon is always the same as for the whole space $\mathbb{H}^n$. This is in striking contrast with respect to the Euclidean case. In fact, consider any $\Omega\subseteq \mathbb S^{n-1}$. We prove that:
\begin{itemize}
\item[i.] if condition \eqref{13} holds, then any solution to problem \eqref{7} blows up in finite time;
\item[ii.] if condition \eqref{10} holds and $u_0$ is large enough, then the corresponding solution to problem \eqref{7} blows up in finite time;
\item[iii.] if condition \eqref{10} holds and $u_0$ is small enough, then there exists a global in time solution to problem \eqref{7}.
\end{itemize}
Observe that the conditions which guarantee existence or nonexistence of global in time solutions are completely independent of $\Omega$. Moreover, since condition \eqref{10} is the opposite of \eqref{13}, we see that our assumptions on the parameters $\mu$ and $p$ are optimal. We also explicitly remark that, for every cone $\mathcal{C}\subseteq\mathbb{H}^n$, the threshold case $\mu=(p-1)\lambda_1(\mathbb{H}^n)$ belongs to the regime where global in time solutions can exist; on the contrary, in the case of $\mathbb{R}^n$, the threshold case $p=1+\frac{2}{n}$ belongs to the regime where all solutions exhibit blow up in finite time.

Note that when $\Omega=\mathbb S^{n-1}$, and thus problem \eqref{7} becomes \eqref{7c}, our results recover those established in \cite{BaPoTe, WaYi} for problem \eqref{7c}.

Let us also explicitly note that when $\mu=0$, condition \eqref{10} is always fulfilled. Hence from our results it follows that, for every $p>1$, any solution to problem
\begin{equation}\label{7d}
\begin{cases}
u_t-\Delta u=u^p&\text{ in }\mathcal{C}\times(0,T),\\
u=0&\text{ in }\partial\mathcal{C}\times(0,T),\\
u=u_0&\text{ on }\mathcal{C}\times\{0\}
\end{cases}
\end{equation}
blows up in finite time, provided that $u_0$ is large enough. On the other hand, for every $p>1$, problem \eqref{7d} admits a global in time solution, whenever $u_0$ is sufficiently small.
\smallskip

In order to briefly explain such enormous difference between $\mathbb R^n$ and $\mathbb H^n$, consider the Laplace operator in polar coordinates.
In $\mathbb R^n$, obviously we have:
$$
\Delta u=u_{rr}+\frac{n-1}r \, u_r+\frac{1}{r^2}\Delta_\theta u,
$$
where the subscript $r$ is used to denote a partial derivative with respect to that variable. On the other hand, on $\mathbb H^n$ we have
$$
\Delta u=u_{rr}+(n-1)\coth r \, u_r+\frac{1}{(\sinh r)^2}\Delta_\theta u.
$$
Observe that in $\mathbb R^n$ the coefficient $\frac 1{r^2}$ multiplies $\Delta_{\theta}$, whereas in $\mathbb H^n$ we have $\frac{1}{(\sinh r)^2}$. Clearly, $\frac{1}{(\sinh r)^2}$ tends to $0$ as $r$ tends to $+\infty$ much faster than
$\frac 1{r^2}$. Consequently, in $\mathbb R^n,$ the term $\frac{1}{r^2}\Delta_\theta u$ plays a certain role and existence and nonexistence results depend on the domain $\Omega$ where $\Delta_\theta$ is considered. On the other hand, in $\mathbb H^n$ the term $\frac{1}{(\sinh r)^2}\Delta_\theta u$ is somehow negligible. So, $\Omega$ does not have any role in determining existence or nonexistence of global solutions.

The proof of finite-time blow-up is based on a generalization of the Kaplan method (see \cite{Kapl}; see also \cite{BaLe}) on unbounded cones of $\mathbb H^n$, with differential equations involving time dependent coefficients. In particular we show that, if $u$ is a solution of problem \eqref{7}, for functions $\Phi$ satisfying suitable conditions, the function
\[ G(t)=\int_0^\infty\int_\Omega e^{\alpha t}u(r,\theta,t)\psi_1(\theta)\Phi(r)(\sinh r)^{n-1}\,d\sigma_\theta dr\]
is a supersolution of an appropriate ordinary differential equation. In such equation a time dependent coefficient appears. To obtain this we develop some ideas used in \cite{BaLe}. Then we specialize our choice of the function $\Phi$, which is a subsolution of an appropriate auxiliary problem. This point is very different from the Euclidean case addressed in \cite{BaLe}, since it is strictly related to the geometry of $\mathbb H^n$. Moreover, in \cite{BaLe} the authors did not need to consider time dependent coefficients, as instead we do here.

In view of our choice of $\Phi$, we can infer that $G(t)$ is a supersolution of another suitable ordinary differential equation. From this we can deduce that $G(t)$
must blow up in finite time, and this yields blow-up for solutions to problem \eqref{7}.

Finally, global existence of solutions for small nonnegative initial data $u_0$ is obtained by means of suitable supersolutions and local barrier functions at points of the boundary of the domain.

\medskip

We conclude the introduction with a final remark. In \cite{BaPoTe} the authors showed also that if $q>-1$, $p>1$ and $u_0\geq0$ on $\mathbb{H}^n$ is sufficiently small, then there exist global solutions of problem
$$
\begin{cases}
  u_t-\Delta u=t^qu^p & \mbox{in } \mathbb{H}^n\times(0,T), \\
  u=u_0 & \mbox{on }\mathbb{H}^n\times\{0\}.
\end{cases}
$$
Adapting the arguments used to study problem \eqref{7} one can show that if $q>-1$, $p>1$, $\Omega\subseteq\mathbb{S}^{n-1}$ is a relatively open domain and $\mathcal{C}=(0,\infty)\times\Omega\subseteq\mathbb{H}^n$, then
\begin{equation}\label{7e}
\begin{cases}
u_t-\Delta u=t^q u^p&\text{ in }\mathcal{C}\times(0,T),\\
u=0&\text{ in }\partial\mathcal{C}\times(0,T),\\
u=u_0&\text{ on }\mathcal{C}\times\{0\},
\end{cases}
\end{equation}
admits global in time solutions, provided $u_0\geq0$ on $\mathcal{C}$ is sufficiently small; on the other hand, if $u_0\geq0$ on $\mathcal{C}$ is large enough, then the corresponding solution of \eqref{7e} blows up in finite time.

\medskip

The paper is organized as follows. In Section \ref{mr} we state our main results concerning both finite-time blow-up of solutions and global existence. In Section \ref{prel} we deduce the key property of the function $G$. Then in Section \ref{nonexi} we introduce the function $\Phi$ and we show our blow-up results, while in Section \ref{exi} we prove existence of global in time solutions for suitably small nonnegative initial data.

\section{Main results}\label{mr}
We deal with solutions $u\in C^{2,1}_{x,t}(\mathcal{C}\times(0,T))$ of \eqref{7}, which achieve the initial datum $u_0$ continuously. Moreover,
\[u(t)\in L^\infty(\mathcal C)\quad \text{ for any }\,\, t\in (0, T).\]
If $T<+\infty$ and
\[\|u(t)\|_{L^\infty(\mathcal C)}\to +\infty \quad \text{ as }\,\, t\to T^-,\]
we say that $u$ blows up in finite time, whereas
if $T=+\infty$, we say that $u$ is a global in time solution.

Concerning finite-time blow-up of solutions to \eqref{7}, for any $u_0\geq0$, $u_0\not \equiv 0$, whenever \eqref{13} if fulfilled, we prove the following
\begin{theorem}\label{thm1}
  Let $u$ be a solution of \eqref{7} with $\Omega\subseteq\mathbb{S}^{n-1}$, $\mathcal{C}=(0,\infty)\times\Omega$, $p>1$, $u_0\geq0$, $u_0\not\equiv0$ continuous and bounded. Assume \eqref{13}.
  Then $u$ blows up in finite time. More precisely one has
  $$
  \|u(t)\|_{L^\infty(\mathcal{C})}\rightarrow+\infty\qquad\text{ as }t\rightarrow T^-
  $$
  with $0<T<\infty$.
\end{theorem}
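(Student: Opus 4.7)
The strategy is a Kaplan-type eigenfunction method adapted to the hyperbolic cone. Following the outline sketched in the introduction, I would work with
\[ G(t) := e^{\alpha t}\int_0^\infty\int_\Omega u(r,\theta,t)\,\psi_1(\theta)\,\Phi(r)\,(\sinh r)^{n-1}\,d\sigma_\theta\,dr, \]
where $\alpha>0$ is a parameter to be chosen and $\Phi\geq 0$ is a radial test function to be selected later. Differentiating $G$ in time, inserting the equation $u_t=\Delta u+e^{\mu t}u^p$, and integrating by parts in the spatial variables, one obtains
\[ G'(t) = \alpha\,G(t) + e^{\alpha t}\int_{\mathcal{C}} u\,\psi_1\left[L_r\Phi-\frac{\omega_1}{\sinh^2 r}\Phi\right](\sinh r)^{n-1}\,d\sigma_\theta\,dr + e^{(\alpha+\mu)t}\int_{\mathcal{C}} u^p\,\psi_1\,\Phi\,(\sinh r)^{n-1}\,d\sigma_\theta\,dr, \]
with $L_r:=\partial_r^2+(n-1)\coth r\,\partial_r$. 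The boundary contributions vanish because $u=0$ on the lateral part of $\partial\mathcal{C}$, $\psi_1=0$ on $\partial\Omega$, and $\Phi$ will be picked with enough decay/vanishing against $(\sinh r)^{n-1}$ at $r=0$ and $r=\infty$.

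I would then specialize $\Phi$ so that, in addition to the integrability $M:=\int_0^\infty \Phi(r)(\sinh r)^{n-1}\,dr<\infty$, it satisfies the radial subsolution inequality
\[ -L_r\Phi + \frac{\omega_1}{\sinh^2 r}\,\Phi \;\leq\; \lambda_1(\mathbb{H}^n)\,\Phi \qquad \text{on }(0,\infty). \]
Producing such a $\Phi$ uniformly in the angular domain $\Omega$ (equivalently, uniformly in $\omega_1\geq 0$) is what I expect to be the main technical obstacle; this step encodes the geometric fact that the bottom of the spectrum of $-\Delta$ on $\mathbb{H}^n$ equals $\lambda_1(\mathbb{H}^n)=(n-1)^2/4$ irrespective of $\Omega$, and it is precisely the feature that will make the final conclusion independent of the cone. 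Given such a $\Phi$, the middle term in the expression for $G'$ is bounded below by $-\lambda_1(\mathbb{H}^n)\,G(t)$, while Jensen's inequality applied to the finite measure $\psi_1(\theta)\,\Phi(r)(\sinh r)^{n-1}\,d\sigma_\theta\,dr$ (whose total mass equals $M$, since $\|\psi_1\|_{L^1(\Omega)}=1$) gives
\[ \int_{\mathcal{C}} u^p\,\psi_1\,\Phi\,(\sinh r)^{n-1}\,d\sigma_\theta\,dr \;\geq\; M^{1-p}\,e^{-p\alpha t}\,G(t)^p. \]
Choosing $\alpha=\lambda_1(\mathbb{H}^n)$ cancels the linear contribution and leaves the closed differential inequality
\[ G'(t) \;\geq\; M^{1-p}\,e^{\,[\mu-(p-1)\lambda_1(\mathbb{H}^n)]\,t}\,G(t)^p. \]

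Under hypothesis \eqref{13}, the exponent $\delta:=\mu-(p-1)\lambda_1(\mathbb{H}^n)$ is strictly positive, so $G$ satisfies $G'(t)\geq c\,e^{\delta t}G(t)^p$ for some $c>0$. Since $u_0\geq 0$ is nontrivial, the parabolic strong minimum principle applied to $u_t-\Delta u\geq 0$ gives $u(\tau,\cdot)>0$ in $\mathcal{C}$ for some small $\tau>0$, hence $G(\tau)>0$. Separation of variables in the ODE inequality then forces $G$ to diverge at some finite time $T^*<+\infty$. The elementary bound $G(t)\leq M\,e^{\alpha t}\,\|u(t)\|_{L^\infty(\mathcal{C})}$ finally shows that $\|u(t)\|_{L^\infty(\mathcal{C})}\to\infty$ at some $T\leq T^*$, which is the statement of the theorem.
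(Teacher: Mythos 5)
Your overall strategy is the paper's: the same weighted average $G(t)$, the same differentiation--integration-by-parts identity, the same Jensen step, and the same reduction to a radial subsolution inequality for $\Phi$. But the step you single out as ``the main technical obstacle'' --- producing an integrable $\Phi\geq 0$ with $\Delta\Phi+\lambda_1(\mathbb{H}^n)\Phi\geq \frac{\omega_1}{\sinh^2 r}\Phi$, i.e.\ the inequality \emph{at} the critical value $\alpha=\lambda_1(\mathbb{H}^n)$ --- is a genuine gap, and it is precisely the point where your choice of parameters makes life strictly harder than necessary. The paper's Lemma \ref{lemma1} establishes this inequality for $\Phi_0=r^me^{-kr^2}$ only under the \emph{strict} condition $\alpha>\lambda_1(\mathbb{H}^n)$, and its proof really uses strictness: the key estimate \eqref{2} requires $\frac{(n-1)^2}{4}(\coth r)^2-\alpha$ to be negative and bounded away from $0$ for large $r$, which fails at $\alpha=\frac{(n-1)^2}{4}$ since $(\coth r)^2>1$ for every finite $r$ (the discriminant $\Delta_1$ is then no longer negative). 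So with your normalization $\alpha=\lambda_1(\mathbb{H}^n)$ the lemma you need is left unproved, and it is not clear it can be proved by this family of test functions at all.

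The fix is simply not to insist on $\alpha=\lambda_1(\mathbb{H}^n)$. Hypothesis \eqref{13} gives you the strictly larger admissible value $\alpha=\frac{\mu}{p-1}>\lambda_1(\mathbb{H}^n)$; with this choice the paper's Lemma \ref{lemma1} applies verbatim, the middle term in your identity for $G'$ becomes nonnegative (rather than merely bounded below by $-\lambda_1 G$), and the exponential factor $e^{(\mu-(p-1)\alpha)t}$ collapses to $1$, leaving the clean inequality $G'\geq G^p$ with $G(0)>0$ (note you do not need the strong minimum principle here: $G(0)=\int u_0\psi_1\Phi(\sinh r)^{n-1}>0$ directly from $u_0\not\equiv0$). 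Your version $G'\geq c\,e^{\delta t}G^p$ with $\delta>0$ would of course also force blow-up \emph{if} your $\Phi$ existed, but as written the argument rests on an unestablished and avoidable lemma. A secondary, minor point: the differentiation under the integral sign and the integration by parts on the unbounded cone need the cutoff/dominated-convergence justification carried out in Proposition \ref{prop1}; your proof takes these for granted.
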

\begin{rem}
  As it is clear from the proof, we have $$T\leq\frac{(G(0))^{1-p}}{p-1},$$ with $G(0)>0$ depending on the initial condition $u_0$ and given by formula \eqref{12}.
\end{rem}

In addition, in the complementary range of values of $\mu$ considered in Theorem \ref{thm1} we again have finite-time blow-up of solutions to \eqref{7}, provided that the initial datum is large enough. This is the content of the following
\begin{theorem}\label{thm2}
  Let $u$ be a solution of \eqref{7} with $\Omega\subseteq\mathbb{S}^{n-1}$, $\mathcal{C}=(0,\infty)\times\Omega$, $p>1$. Suppose that $u_0\geq0$, $u_0\not\equiv0$ is continuous, bounded and large enough, in a suitably weighted $L^1$ sense. Assume \eqref{10}.
  Then $u$ blows up in finite time.
\end{theorem}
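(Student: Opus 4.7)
The plan is to reuse the Kaplan-type machinery built for Theorem~\ref{thm1} almost verbatim and to close the argument with a different final ODE step. We will work with the functional
\[ G(t)=\int_0^\infty\int_\Omega e^{\alpha t}u(r,\theta,t)\psi_1(\theta)\Phi(r)(\sinh r)^{n-1}\,d\sigma_\theta\,dr, \]
where $\Phi$ is the positive radial subsolution already constructed in Section~\ref{nonexi}, chosen so that $M:=\int_0^\infty\int_\Omega\psi_1\Phi(\sinh r)^{n-1}\,d\sigma_\theta\,dr$ is finite, and with the critical choice $\alpha=\mu/(p-1)$. Testing equation~\eqref{7} against $\psi_1(\theta)\Phi(r)(\sinh r)^{n-1}$, integrating by parts on $\mathcal C$ (boundary terms vanish thanks to $\psi_1|_{\partial\Omega}=0$ and the decay of $\Phi$), exploiting the subsolution property of $\psi_1\Phi$, and applying Jensen's inequality with respect to the probability measure $\psi_1\Phi(\sinh r)^{n-1}/M$ will produce, exactly as in the proof of Theorem~\ref{thm1}, the Bernoulli-type differential inequality
\[ G'(t)\geq \beta\, G(t)+\frac{1}{M^{p-1}}\,G(t)^p,\qquad \beta:=\frac{\mu}{p-1}-\lambda_1(\mathbb H^n). \]

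The new feature under hypothesis~\eqref{10} is that $\beta\leq 0$, so the drift now opposes blow-up and the implication ``$G(0)>0 \Rightarrow$ blow-up'' used in Theorem~\ref{thm1} no longer holds unconditionally. The next step will be the standard Bernoulli substitution $h(t):=G(t)^{1-p}$, which linearises the inequality into
\[ h'(t)\leq (p-1)|\beta|\,h(t)-\frac{p-1}{M^{p-1}}. \]
An integrating-factor computation gives $h$ in closed form, and one reads off that $h$ reaches zero in finite time --- equivalently, $G$ blows up in finite time --- precisely when
\[ G(0)>M\,|\beta|^{1/(p-1)}=M\!\left(\lambda_1(\mathbb H^n)-\frac{\mu}{p-1}\right)^{1/(p-1)}. \]
Since $G(0)$ is a weighted $L^1$-norm of $u_0$, this is exactly the content of the hypothesis ``$u_0$ large enough in a suitably weighted $L^1$ sense'' appearing in the statement, and it forces $\|u(t)\|_{L^\infty(\mathcal C)}\to+\infty$ as $t\to T^-$ for some finite $T$.

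The principal technical difficulty is already absorbed into the proof of Theorem~\ref{thm1}: exhibiting a radial weight $\Phi$ which is simultaneously a positive subsolution of the auxiliary radial equation tied to $\lambda_1(\mathbb H^n)$ and sufficiently decaying at $r\to\infty$ that $M<\infty$ and the integration by parts on the unbounded cone $\mathcal C$ are legitimate. With such a $\Phi$ available, the passage from Theorem~\ref{thm1} to Theorem~\ref{thm2} is essentially free: it is only the elementary scalar Bernoulli ODE comparison above, and differentiation of $G$ under the integral sign together with the spatial integration by parts is justified by the prescribed solution class ($u\in C^{2,1}_{x,t}$ with $u(t)\in L^\infty(\mathcal C)$) combined with standard parabolic regularity.
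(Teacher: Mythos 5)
Your overall architecture (Kaplan functional $G$, then an ODE comparison whose blow-up requires a largeness threshold on $G(0)$) is the right one, and your final Bernoulli computation is internally consistent. But the differential inequality you feed into it cannot be derived the way you propose, and this is a genuine gap, not a technicality. You take $\alpha=\mu/(p-1)$, which under \eqref{10} satisfies $\alpha\leq\lambda_1(\mathbb H^n)$, and you claim the drift term produced by Proposition \ref{prop1} equals $\beta G$ with $\beta=\alpha-\lambda_1(\mathbb H^n)$. For that you would need a positive weight $\Phi$, integrable against $(\sinh r)^{n-1}\,dr$, satisfying $\Delta\Phi+\lambda_1(\mathbb H^n)\Phi\geq\frac{\omega_1}{(\sinh r)^2}\Phi$. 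No such $\Phi$ exists: pairing this inequality with a positive radial solution $\varphi$ of $\Delta\varphi=-\lambda_1(\mathbb H^n)\varphi$ (which decays like $(1+r)e^{-(n-1)r/2}$) over balls $B_R$ and using Green's identity, the boundary terms vanish for any $\Phi$ with Gaussian-type decay, forcing $\Delta\Phi+\lambda_1\Phi-\frac{\omega_1}{(\sinh r)^2}\Phi\equiv 0$; but positive exact (generalized) eigenfunctions at the bottom of the spectrum decay only like $e^{-(n-1)r/2}$ and hence are not integrable against $(\sinh r)^{n-1}\sim e^{(n-1)r}$. This is precisely why Lemma \ref{lemma1} requires $\alpha>\lambda_1(\mathbb H^n)$ \emph{strictly}: the condition is forced by the spectral bound, not by the particular choice $\Phi_0=r^me^{-kr^2}$. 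So with your choice of $\alpha$ the sign of the drift term is out of control and the inequality $G'\geq\beta G+M^{1-p}G^p$ is unsubstantiated.

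The paper's actual proof resolves this differently: it keeps $\alpha>\lambda_1(\mathbb H^n)$ (so Lemma \ref{lemma1} applies and the drift term is nonnegative and can simply be discarded) and accepts the price of a time-decaying coefficient, obtaining $G'(t)\geq e^{-((p-1)\alpha-\mu)t}(G(t))^p$ with $(p-1)\alpha-\mu>0$. Since $\int_0^\infty e^{-((p-1)\alpha-\mu)t}\,dt<\infty$, direct integration shows $G$ blows up in finite time exactly when $(G(0))^{1-p}<\frac{p-1}{(p-1)\alpha-\mu}$, which is the weighted $L^1$ largeness condition \eqref{14}. If you want to salvage your write-up, replace the linear-drift Bernoulli step by this time-dependent-coefficient integration; the structure of your threshold condition survives, but the exponent $\alpha$ must stay strictly above $\lambda_1(\mathbb H^n)$.
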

\begin{rem}
   More precisely we prove the following: assume \eqref{10}, let $\alpha>\lambda_1(\mathbb{H}^n)$, $m\geq2$, $m(m-1)>\omega_1$, $k\in(0,k_0]$ with $k_0$ given by Lemma \ref{lemma1} and assume
   \begin{align}\label{14}
   &\int_0^{\infty}\int_\Omega u_0(r,\theta)\psi_1(\theta)r^me^{-kr^2}(\sinh r)^{n-1}\,d\sigma_\theta dr\\
   \nonumber&\qquad>\left(\alpha-\frac{\mu}{p-1}\right)^{\frac{1}{p-1}}\int_0^{\infty}r^me^{-kr^2}(\sinh r)^{n-1}\,dr,
   \end{align}
then
  $$
  \|u(t)\|_{L^\infty(\mathcal{C})}\rightarrow+\infty\qquad\text{ as }t\rightarrow T^-
  $$
  with $0<T\leq T^*<+\infty$ and $T^*$ given in \eqref{20}.
\end{rem}

Finally, we consider the same range of values of $\mu$ treated in Theorem \ref{thm2}, but instead of supposing that $u_0$ is nonnegative and large enough, on the contrary now we assume that the initial datum is nonnegative and sufficiently small. In this case, we get global in time existence of solutions to \eqref{7}. In fact, we show the following

\begin{theorem}\label{thm3}
  Let $\Omega\subseteq\mathbb{S}^{n-1}$, $\mathcal{C}=(0,\infty)\times\Omega$, $p>1$. Assume \eqref{10}.  Suppose that $u_0$ is continuous, bounded and small enough.
  Then there exists a global in time solution $u$ of problem \eqref{7}. In addition $u\in L^\infty(\mathcal{C}\times(0,\infty))$.
\end{theorem}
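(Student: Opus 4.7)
The plan is to construct a global, bounded supersolution $\bar u$ of problem \eqref{7} and then to obtain $u$ as the monotone limit of solutions to the problem on an exhausting family of bounded subdomains of $\mathcal C$, with uniform control via $\bar u$. The content is in the supersolution, whose shape is dictated by the positivity of the bottom of the spectrum of $-\Delta$ on $\mathbb H^n$.

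Let $\phi_0=\phi_0(r)$ denote the radial spherical function of $\mathbb H^n$, that is the unique smooth positive solution of
\[
-\phi_0''-(n-1)\coth r\,\phi_0' \;=\; \lambda_1(\mathbb H^n)\,\phi_0,\qquad \phi_0(0)=1,
\]
which is bounded and behaves like $r\,e^{-(n-1)r/2}$ as $r\to\infty$; let $\psi_1$ be the first Dirichlet eigenfunction on $\Omega$ from \eqref{8}. Set $\Phi(r,\theta):=\phi_0(r)\psi_1(\theta)$ and
\[
\bar u(x,t)\;:=\;\delta\,e^{-\beta t}\,\Phi(x),\qquad \beta:=\max\!\Bigl(0,\tfrac{\mu}{p-1}\Bigr).
\]
Expressing $\Delta$ in polar coordinates, since $-\Delta_\theta\psi_1=\omega_1\psi_1$, one obtains
\[
-\Delta\Phi \;=\; \lambda_1(\mathbb H^n)\,\Phi \;+\; \frac{\omega_1}{\sinh^2 r}\,\Phi \;\ge\; \lambda_1(\mathbb H^n)\,\Phi,
\]
so $\bar u_t-\Delta \bar u\ge(\lambda_1-\beta)\,\bar u$, while $e^{\mu t}\bar u^p=\delta^{p-1}\Phi^{p-1}e^{(\mu-(p-1)\beta)t}\bar u\le \delta^{p-1}\|\Phi\|_\infty^{p-1}\bar u$ by the choice $\beta\ge\mu/(p-1)$. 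When the inequality in \eqref{10} is strict, $\lambda_1-\beta>0$, and for $\delta$ small enough (depending only on $\lambda_1-\beta$ and $\|\Phi\|_\infty$) the supersolution inequality $\bar u_t-\Delta\bar u\ge e^{\mu t}\bar u^p$ holds on $\mathcal C\times(0,\infty)$; moreover $\bar u=0$ on $\partial \mathcal C\times(0,\infty)$ since $\psi_1$ vanishes on $\partial\Omega$, so the boundary condition is matched. In the threshold case $\mu=(p-1)\lambda_1(\mathbb H^n)$ the above cancellation degenerates; there one instead uses, restricted to $\mathcal C$, the global solution of \eqref{7c} on $\mathbb H^n$ provided for small data by \cite{BaPoTe,WaYi}, whose restriction to $\mathcal C$ remains a supersolution of \eqref{7} since it is nonnegative on $\partial\mathcal C$.

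Assuming $u_0\le\bar u(\cdot,0)=\delta\Phi$, which is the quantitative reading of ``$u_0$ small enough'', I approximate. Set $\mathcal C_R:=\mathcal C\cap B_R(o)$ and solve, for each $R>0$,
\[
(u_R)_t-\Delta u_R=e^{\mu t}u_R^p\ \text{in }\mathcal C_R\times(0,\infty),\quad u_R=0\ \text{on }\partial\mathcal C_R\times(0,\infty),\quad u_R(\cdot,0)=u_0,
\]
by classical parabolic theory on bounded Lipschitz domains. Comparison with $\bar u$ yields $0\le u_R\le \bar u$, preventing finite-time blow-up, and a standard comparison argument gives monotonicity of $\{u_R\}$ in $R$. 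Passing to the limit $R\to\infty$ and invoking interior parabolic regularity, $u:=\lim_R u_R$ belongs to $C^{2,1}_{x,t}(\mathcal C\times(0,\infty))$, solves the PDE classically, and satisfies $0\le u\le \bar u\in L^\infty(\mathcal C\times(0,\infty))$.

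The main obstacle is verifying that $u$ attains the boundary condition $u=0$ on $\partial\mathcal C$ and the initial datum $u_0$ continuously, especially near the vertex $o$, where $\partial\mathcal C$ fails to be a smooth hypersurface. Away from $o$ the boundary is smooth and standard parabolic barriers apply. At $o$ one constructs an explicit local upper barrier of the form $c\,r^\gamma\psi_1(\theta)$ on $\{r<\rho\}\cap\mathcal C$, where $\gamma>0$ is the positive root of $\gamma(\gamma+n-2)=\omega_1$ (the Euclidean tangent-cone analogue of the eigenvalue relation \eqref{8}): a direct computation with $\Delta$ in polar coordinates, using $\sinh r=r+O(r^3)$ and $\coth r = 1/r + O(r)$ near the origin, shows that this function is a local supersolution of the linear problem near $o$, with the nonlinear term absorbed for $\delta$ and $\rho$ small. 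These barriers pass through the approximation $R\to\infty$ and yield continuity of $u$ up to the parabolic boundary, concluding the proof.
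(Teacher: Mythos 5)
Your overall architecture is the same as the paper's: a positive, bounded, global-in-time supersolution (imported from \cite{BaPoTe,WaYi} in the threshold case $\mu=(p-1)\lambda_1(\mathbb H^n)$, and built by hand as $\delta e^{-\beta t}\phi_0(r)\psi_1(\theta)$ when the inequality in \eqref{10} is strict), exhaustion of $\mathcal C$ by bounded domains, comparison, interior compactness, and local barriers to recover the boundary and initial data. The supersolution computation in the strict case is correct, and is a nice explicit alternative to the paper's Lemma \ref{lemma2}. The genuine gap is in your barrier at the vertex. With $\gamma>0$ the positive root of $\gamma(\gamma+n-2)=\omega_1$, the function $r^\gamma\psi_1(\theta)$ is \emph{not} a local supersolution near $o$ in $\mathbb H^n$: using $\coth r=\frac1r+\frac r3+O(r^3)$ and $\frac1{\sinh^2r}=\frac1{r^2}-\frac13+O(r^2)$ one finds
\begin{align*}
\Delta\bigl(r^\gamma\psi_1\bigr)&=\psi_1\,r^{\gamma-2}\bigl[\gamma(\gamma+n-2)-\omega_1\bigr]+\psi_1\,r^{\gamma}\Bigl[\tfrac{(n-1)\gamma+\omega_1}{3}+O(r^2)\Bigr]\\
&=\psi_1\,r^{\gamma}\Bigl[\tfrac{(n-1)\gamma+\omega_1}{3}+O(r^2)\Bigr]>0
\end{align*}
for small $r$: the Euclidean leading order cancels exactly by your choice of $\gamma$, and \emph{both} hyperbolic corrections enter with the wrong sign, so your candidate is strictly subharmonic near the vertex and cannot serve as an upper barrier. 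This is fixable: take any exponent $\gamma'\in(0,\gamma)$, so that the now-negative coefficient of $r^{\gamma'-2}$ dominates and $\Delta(r^{\gamma'}\psi_1)\le -c_0r^{\gamma'-2}\psi_1$ near $o$; or simply invoke Miller-type cone barriers as the paper does in Lemmas \ref{lemma3} and \ref{lemma4}.

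Two further points. First, a purely spatial barrier cannot dominate $u$ on the initial (bottom) slice of the parabolic neighborhood of a point $(x_0,t_0)$ with $t_0>0$: near the vertex one only knows $u\le C\psi_1(\theta)$ there, which is not $O(r^{\gamma})$, so a time-dependent term such as $C'(t-t_0)^2$ must be added — this is exactly the content of the paper's Lemma \ref{lemma4}. Second, the claims that the lateral boundary is smooth away from $o$ and that $\mathcal C\cap B_R(o)$ is a Lipschitz domain on which classical solvability is standard presume regularity of $\partial\Omega$ not contained in the hypotheses, and the approximating domains you use already contain the singular vertex; the paper sidesteps both issues by exhausting with smooth domains $D_j$ compactly contained in $\mathcal C$, with cut-off initial data, and applying the barriers only to the limit function.
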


\begin{rem}
The hypothesis $u_0$ sufficiently small made in Theorem \ref{thm3} can be formulated more precisely. Indeed, as we will see in  Lemma \ref{lemma2} below, if \eqref{10} holds there exists a
 positive supersolution $v$ of
  \begin{equation}\label{15}
  v_t-\Delta v=e^{\mu t}v^p\qquad\text{ in }\mathbb{H}^n\times(0,\infty)\,.
  \end{equation}
 We prove Theorem \ref{thm3} under the assumption that
  $$0\leq u_0(x)\leq v(x,0)\qquad\text{ for every }x\in\mathcal{C}.$$
\end{rem}

Concerning problem \eqref{7e}, we have that solutions with large enough nonnegative initial data $u_0$ blow up in finite time, while suitably small nonnegative initial data give rise to global in time bounded solutions. We state these results in the following theorems.

\begin{theorem}\label{thm2bis}
  Let $u$ be a solution of \eqref{7e} with $\Omega\subseteq\mathbb{S}^{n-1}$, $\mathcal{C}=(0,\infty)\times\Omega$, $p>1$, $q>-1$. Suppose that $u_0\geq0$, $u_0\not\equiv0$ is continuous, bounded and large enough, in a suitably weighted $L^1$ sense. Then $u$ blows up in finite time.
\end{theorem}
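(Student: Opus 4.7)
The plan is to adapt to the reaction term $t^q u^p$ the Kaplan-type argument that underlies the proof of Theorem \ref{thm2}. Fix parameters $\alpha>\lambda_1(\mathbb H^n)$, $m\geq 2$ with $m(m-1)>\omega_1$, and $k\in(0,k_0]$ with $k_0$ provided by Lemma \ref{lemma1}, and set $\Phi(r):=r^m e^{-kr^2}$. In analogy with the quantity used in Theorem \ref{thm2}, I introduce
\[G(t) := e^{\alpha t} \int_0^\infty \int_\Omega u(r,\theta,t)\,\psi_1(\theta)\,\Phi(r)\,(\sinh r)^{n-1}\,d\sigma_\theta\, dr.\]
Differentiating $G$, substituting equation \eqref{7e}, then integrating by parts twice in $r$ (boundary terms vanishing thanks to $\Phi(0)=0$ and the Gaussian decay of $\Phi$) and once in $\theta$ (using $-\Delta_\theta\psi_1=\omega_1\psi_1$ together with the Dirichlet conditions on $\psi_1$ and on $u$), one arrives at
\[G'(t)=\alpha G(t)+e^{\alpha t}\int_0^\infty\int_\Omega u\,(\mathcal L \Phi)\,\psi_1(\sinh r)^{n-1} d\sigma_\theta dr+t^q e^{\alpha t}\int_0^\infty\int_\Omega u^p\,\psi_1\Phi(\sinh r)^{n-1} d\sigma_\theta dr,\]
where $\mathcal L\Phi:=\Phi''+(n-1)\coth r\,\Phi'-\frac{\omega_1}{\sinh^2 r}\Phi$.

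By Lemma \ref{lemma1}, the choice of parameters guarantees the pointwise bound $\mathcal L\Phi\geq -\alpha \Phi$, so the corresponding integral term absorbs the linear contribution $\alpha G(t)$. Applying Jensen's inequality to the nonlinear integral with respect to the finite measure $\psi_1(\theta)\Phi(r)(\sinh r)^{n-1}\,d\sigma_\theta dr$ of total mass $M:=\int_0^\infty r^m e^{-kr^2}(\sinh r)^{n-1}\,dr$, one obtains the differential inequality
\[G'(t)\geq M^{1-p}\,t^q\,e^{-(p-1)\alpha t}\,G(t)^p.\]
Integrating this Bernoulli-type inequality yields
\[G(t)^{1-p}\leq G(0)^{1-p}-(p-1)\,M^{1-p}\int_0^t s^q e^{-(p-1)\alpha s}\,ds.\]

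Since $q>-1$ and $(p-1)\alpha>0$, the integral $I:=\int_0^\infty s^q e^{-(p-1)\alpha s}\,ds$ is finite and strictly positive. The quantitative form of the hypothesis ``$u_0$ is large enough in a weighted $L^1$ sense'' is therefore
\[\int_0^\infty \int_\Omega u_0(r,\theta)\,\psi_1(\theta)\,r^m e^{-kr^2}(\sinh r)^{n-1}\,d\sigma_\theta dr\;>\;M\bigl((p-1)I\bigr)^{-1/(p-1)},\]
for some admissible choice of $(\alpha,m,k)$. Under this assumption, the right-hand side of the estimate for $G(t)^{1-p}$ becomes negative at some finite time $T>0$, forcing $G(t)\to +\infty$ as $t\to T^-$; since $G(t)\leq M e^{\alpha t}\|u(t)\|_{L^\infty(\mathcal C)}$, this implies that $u$ blows up in finite time.

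The only substantive difference from the argument behind Theorem \ref{thm2} is that the exponential weight $e^{\mu t}$ is replaced by the polynomial weight $t^q$, and the assumption $q>-1$ is precisely what ensures the convergence of $I$ at $s=0$, playing the role that condition \eqref{10} played in the previous argument; no modification of Lemma \ref{lemma1} or of the choice of $\Phi$ is required. The main technical obstacle is the rigorous justification of the integration by parts on the unbounded cone $\mathcal C$, which requires appropriate decay of $u(r,\cdot,t)$ and its radial derivative as $r\to\infty$ and suitable control at the lateral boundary $\partial\mathcal C$; these follow from the boundedness of $u$ and standard parabolic regularity exactly as in the proof of Theorem \ref{thm2}.
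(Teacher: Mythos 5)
Your proposal is correct and follows essentially the same route as the paper: the paper's proof of Theorem \ref{thm2bis} invokes Proposition \ref{prop1bis} (the $t^q$ analogue of Proposition \ref{prop1}, proved by the same cutoff/limit argument you allude to) together with Lemma \ref{lemma1} to get $G'(t)\geq t^q e^{-(p-1)\alpha t}G(t)^p$, and then integrates against $H(t)=\int_0^t s^q e^{-(p-1)\alpha s}\,ds$ with $H(\infty)<\infty$, arriving at exactly your largeness condition $G(0)>\bigl((p-1)H(\infty)\bigr)^{-1/(p-1)}$ (your factor $M$ only reflects that you did not normalize $\Phi$). The only cosmetic difference is that the paper records the explicit bound $T\leq H^{-1}\bigl((G(0))^{1-p}/(p-1)\bigr)$ on the blow-up time.
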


\begin{theorem}\label{thm3bis}
  Let $\Omega\subseteq\mathbb{S}^{n-1}$, $\mathcal{C}=(0,\infty)\times\Omega$, $p>1$, $q>-1$. Suppose that $u_0$ is continuous, bounded and small enough.
  Then there exists a global in time solution $u$ of problem \eqref{7e}. In addition $u\in L^\infty(\mathcal{C}\times(0,\infty))$.
\end{theorem}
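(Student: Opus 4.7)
The approach parallels that of Theorem \ref{thm3}: one first constructs a bounded positive global supersolution $v$ of the corresponding Cauchy problem
\[v_t-\Delta v = t^q\, v^p\quad\text{in }\mathbb{H}^n\times(0,\infty),\]
then uses it as an upper barrier on $\mathcal C$ (noting that $v>0$ on $\partial\mathcal C\subset\mathbb{H}^n$, so the homogeneous Dirichlet condition in \eqref{7e} is automatically respected), and finally produces the solution of \eqref{7e} via a monotone exhaustion by smooth bounded subdomains of $\mathcal C$. The smallness hypothesis on $u_0$ is then made precise, as in the remark following Theorem \ref{thm3}, by requiring $0\leq u_0(x)\leq v(x,0)$ for every $x\in\mathcal C$.

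For the construction of $v$ I would use the product ansatz $v(x,t)=F(t)\,w(x)$, where $w$ is the same bounded, strictly positive function on $\mathbb{H}^n$ that appears in Lemma \ref{lemma2}, satisfying $-\Delta w\geq c\,w$ for some $c\in(0,\lambda_1(\mathbb{H}^n))$; the existence of such $w$ relies critically on the strict positivity $\lambda_1(\mathbb{H}^n)=(n-1)^2/4>0$. Setting $M:=\|w\|_\infty$, the supersolution inequality reduces to
\[F'(t)+c\,F(t)\geq M^{p-1}\,t^q\,F(t)^p,\qquad t>0,\qquad F(0)=\delta.\]
The Bernoulli substitution $H=F^{1-p}$ linearises this into $H'-c(p-1)H\leq -(p-1)M^{p-1}t^q$, and the integrating factor $e^{-c(p-1)t}$ yields, for the equality version,
\[H(t)=e^{c(p-1)t}\Bigl[\,\delta^{1-p}-(p-1)M^{p-1}\!\int_0^t s^q e^{-c(p-1)s}\,ds\,\Bigr].\]
Crucially, $\int_0^\infty s^q e^{-c(p-1)s}\,ds<\infty$ because the condition $q>-1$ controls the singularity at $s=0^+$ while the exponential factor controls $s\to\infty$; choosing $\delta$ small enough keeps the bracket uniformly bounded away from zero, so $H$ is strictly positive and bounded below, making $F=H^{1/(1-p)}$ a bounded, positive, exponentially decaying function on $[0,\infty)$. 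Then $v(x,t):=F(t)\,w(x)$ is the required supersolution, with $v(\cdot,0)=\delta w$.

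Once $v$ is available, I would build the solution of \eqref{7e} by the same exhaustion procedure used for Theorem \ref{thm3}: approximate $\mathcal C$ by an increasing family of smooth bounded subdomains $\mathcal C_R$ (treating the vertex $o$ by local barriers), solve the Dirichlet initial-boundary value problem on each $\mathcal C_R\times(0,\infty)$ by standard parabolic theory, and obtain classical solutions $u_R$ trapped between $0$ and $v$ by the comparison principle. Monotonicity of $\{u_R\}_R$ in $R$ together with the uniform pointwise bound $u_R\leq v$ then permit passing to the limit $R\to\infty$, producing a global solution $u\in L^\infty(\mathcal C\times(0,\infty))$ of \eqref{7e}. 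The principal analytic obstacle is really the construction of $w$: without the strict positivity of $\lambda_1(\mathbb{H}^n)$ no bounded positive function on $\mathbb{H}^n$ satisfying $-\Delta w\geq c w$ with $c>0$ could exist, and the linearisation trick would collapse — this is precisely the place where hyperbolic geometry enters the argument. All remaining ingredients (local barriers at $o$ and the monotone limit as $R\to\infty$) are parallel to the proof of Theorem \ref{thm3} and introduce no new difficulties.
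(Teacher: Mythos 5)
Your proposal is correct and follows essentially the same route as the paper: a bounded positive global supersolution of the whole-space problem $v_t-\Delta v=t^qv^p$ combined with the exhaustion-and-comparison scheme of Theorem \ref{thm3}, the only difference being that you carry out explicitly the supersolution construction (separation of variables with $-\Delta w\geq cw$, $0<c<\lambda_1(\mathbb{H}^n)$, and the Bernoulli reduction, where $q>-1$ is exactly what makes $\int_0^\infty s^qe^{-c(p-1)s}\,ds$ finite) which the paper simply outsources to Subsection 5.2 of \cite{BaPoTe}. Your ODE computation and the smallness condition on $\delta$ are sound, so this is a faithful, slightly more self-contained version of the paper's argument.
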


\section{A preliminary estimate}\label{prel}
The following proposition can be regarded as a generalization of the Kaplan method in unbounded cones of $\mathbb H^n$. Moreover, a time dependent coefficient appears in the estimate.

\begin{proposition}\label{prop1}
  Let $u$ be a solution of problem \eqref{7}, $\alpha\in\mathbb{R}$ and $\Phi\in C^2([0,\infty))$ be a nonnegative function satisfying the following assumptions
  \begin{itemize}
    \item[i)] $\Phi(r)\sim l\, r^m$ as $r$ tends to $0$, with $l\in(0,\infty)$ and $m\geq2$;
    \item[ii)] $\displaystyle\int_0^\infty\Phi(r)(\sinh r)^{n-1}\,dr=1$;
    \item[iii)] $\displaystyle\int_0^\infty\left(|\Phi_r(r)|+|\Phi_{rr}(r)|\right)(\sinh r)^{n-1}\,dr<\infty$.
  \end{itemize}
  Then the function
\begin{equation}\label{eq1a}
  G(t)=\int_0^\infty\int_\Omega e^{\alpha t}u(r,\theta,t)\psi_1(\theta)\Phi(r)(\sinh r)^{n-1}\,d\sigma_\theta dr
\end{equation}
satisfies
\begin{align}\label{eq2a}
&G'(t)\geq e^{(\mu-(p-1)\alpha)t}(G(t))^p\\
\nonumber&\,\,\,\,+\int_0^\infty\int_\Omega e^{\alpha t}u(r,\theta,t)\Psi_1(\theta)\left(\Delta\Phi(r)+\alpha\Phi(r)-\frac{\omega_1}{(\sinh r)^2}\Phi(r)\right)(\sinh r)^{n-1}\,d\sigma_\theta dr
\end{align}
for all $t\in(0,T)$.
\end{proposition}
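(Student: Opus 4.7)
The plan is to differentiate $G$ under the integral, substitute the PDE for $u_t$, move all derivatives from $u$ onto $\psi_1 \Phi$ via integration by parts in both the angular and radial variables, and then apply Jensen's inequality to the reaction term.

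First, I would write
\begin{equation*}
G'(t) = \alpha G(t) + e^{\alpha t}\int_0^\infty\int_\Omega u_t\, \psi_1 \Phi (\sinh r)^{n-1}\, d\sigma_\theta\, dr,
\end{equation*}
and substitute $u_t = \Delta u + e^{\mu t} u^p$, using the polar form
\begin{equation*}
\Delta u = \frac{1}{(\sinh r)^{n-1}}\bigl[(\sinh r)^{n-1} u_r\bigr]_r + \frac{1}{(\sinh r)^2}\Delta_\theta u.
\end{equation*}
For the angular part, since $\psi_1 = 0$ on $\partial\Omega$ and $u=0$ on $\partial\mathcal C$, integration by parts and $-\Delta_\theta\psi_1 = \omega_1 \psi_1$ yield $\int_\Omega (\Delta_\theta u)\psi_1\, d\sigma_\theta = -\omega_1 \int_\Omega u\,\psi_1\, d\sigma_\theta$. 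For the radial part, I would integrate by parts twice in $r$: the first integration turns the radial Laplacian of $u$ into $-\int_0^\infty u_r \Phi_r (\sinh r)^{n-1}dr$, and the second gives $\int_0^\infty u (\sinh r)^{n-1}\bigl[\Phi_{rr}+(n-1)\coth r\,\Phi_r\bigr]dr = \int_0^\infty u\,\Delta\Phi\,(\sinh r)^{n-1}dr$. Combining these two identities gives
\begin{equation*}
\int \Delta u\cdot \psi_1 \Phi (\sinh r)^{n-1}\,d\sigma_\theta\, dr = \int u\,\psi_1 \left(\Delta\Phi - \tfrac{\omega_1}{(\sinh r)^2}\Phi\right)(\sinh r)^{n-1}\,d\sigma_\theta\, dr,
\end{equation*}
which, together with the $\alpha G(t)$ term, produces the second line of \eqref{eq2a}.

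For the nonlinear term, assumptions (ii) and the $L^1$-normalization of $\psi_1$ make $d\nu := \psi_1(\theta)\Phi(r)(\sinh r)^{n-1}\,d\sigma_\theta\, dr$ a probability measure on $(0,\infty)\times\Omega$. Jensen's inequality applied to the convex function $x\mapsto x^p$ gives $\int u^p\,d\nu \geq (\int u\,d\nu)^p = e^{-p\alpha t}G(t)^p$, and multiplying by $e^{(\mu+\alpha)t}$ produces the first term $e^{(\mu-(p-1)\alpha)t}G(t)^p$ of the right-hand side.

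The main technical obstacle is justifying the integrations by parts, particularly the vanishing of the boundary contributions $\bigl[(\sinh r)^{n-1}(u_r\Phi - u\Phi_r)\bigr]_0^\infty$. At $r=0^+$, assumption (i), $\Phi(r)\sim l r^m$ with $m\ge 2$, combined with $\Phi_r(r) = O(r^{m-1})$ and $(\sinh r)^{n-1} = O(r^{n-1})$, handles this: both $u\Phi_r (\sinh r)^{n-1}$ and $u_r\Phi(\sinh r)^{n-1}$ vanish as $r\to 0^+$ by parabolic regularity of $u$. At $r=\infty$, I would introduce a cutoff at $r=R_k$, perform the integration by parts on $(0,R_k)$, and let $R_k\to\infty$. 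Assumption (iii) gives $|\Phi_r|(\sinh r)^{n-1} \in L^1(0,\infty)$, so one can choose a sequence $R_k\to\infty$ along which $|\Phi_r(R_k)|(\sinh R_k)^{n-1}\to 0$; the same argument, applied now to $\Phi_{rr}$ via assumption (iii) together with the ODE satisfied by the boundary terms, yields a sequence along which $\Phi(R_k)(\sinh R_k)^{n-1}u_r(R_k,\theta,t)$ also vanishes. The boundedness of $u$ and standard interior parabolic estimates controlling $u_r$ uniformly on each $\{r=R_k\}$ close the argument. Once these boundary passages are established, the computation is routine and the stated inequality \eqref{eq2a} follows.
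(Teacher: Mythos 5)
Your overall strategy coincides with the paper's: differentiate under the integral, insert the equation, integrate by parts in $\theta$ using $\psi_1=0$ on $\partial\Omega$ and $u=0$ on $\partial\mathcal{C}$, integrate by parts in $r$ to move $\Delta$ onto $\Phi$, and apply Jensen's inequality (you use the product probability measure $\psi_1\Phi(\sinh r)^{n-1}\,d\sigma_\theta\,dr$ in one shot, the paper applies Jensen first in $\theta$ and then in $r$; these are equivalent). The algebra producing the two terms of \eqref{eq2a} is correct.

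The genuine gap is in your treatment of the radial integration by parts at infinity, specifically the boundary term $\bigl[(\sinh r)^{n-1}\Phi\,\tilde u_r\bigr]_{r=R_k}$ where $\tilde u_r(R_k,t)=e^{\alpha t}\int_\Omega u_r(R_k,\theta,t)\psi_1\,d\sigma_\theta$. You dispose of it by invoking ``standard interior parabolic estimates controlling $u_r$ uniformly on each $\{r=R_k\}$,'' but the sphere $\{r=R_k\}\cap\mathcal{C}$ reaches the lateral boundary $\partial\mathcal{C}$, where no smoothness of $\partial\Omega$ is assumed and interior estimates degenerate: $|\nabla u|$ may blow up as $\theta\to\partial\Omega$, and whether the weight $\psi_1(\theta)$ (which vanishes there) compensates is exactly the kind of quantitative boundary-regularity statement that is not available for a general relatively open $\Omega$. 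Moreover, you would need this control uniformly along the sequence $R_k\to\infty$, and for that you would also need $\sup_{s\in[t-\delta,t+\delta]}\|u(s)\|_{L^\infty(\mathcal{C})}<\infty$, which is not among the standing assumptions (only $u(t)\in L^\infty$ for each fixed $t$ is assumed). The paper avoids this entirely by multiplying $\Phi$ by a radial cutoff $\zeta$ supported in $(\varepsilon/2,2R)$ and integrating by parts with the compactly supported test function $\zeta\Phi$: there are then \emph{no} boundary terms in $r$, and the error terms $\int\tilde u\,(2\Phi_r\zeta_r+\Phi\Delta\zeta)(\sinh r)^{n-1}\,dr$ involve only $\tilde u$ itself (bounded for fixed $t$) against $\tfrac{C}{R}\int_R^{2R}(|\Phi_r|+\Phi)(\sinh r)^{n-1}\,dr$, which tends to $0$ by assumptions (ii)--(iii). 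You should replace your sequence-of-radii argument by this cutoff device (or otherwise supply a genuine bound on $\tilde u_r$). A second, more minor omission: differentiating $G$ under the integral sign requires a domination of $u_t$; the paper justifies this by showing $u_t(\cdot,t)\in L^\infty(\mathcal{C})$ via semigroup regularity, a step you do not address.
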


\begin{proof}
We consider the first eigenvalue $\omega_1\geq0$ and the first positive eigenfunction $\psi_1$ of problem \eqref{8}, normalized with $\|\psi_1\|_{L^1(\Omega)}=1$. We define
$$
\tilde{u}(r,t)=e^{\alpha t}\int_\Omega \psi_1(\theta)u(r,\theta,t)\,d\sigma_\theta
$$
for $r>0$, $t\in(0,T)$. Note that $\tilde{u}$ inherits the same regularity of $u$ in $(0,\infty)\times(0,T)$ and that
$$
G(t)=\int_0^\infty\tilde{u}(r,t)\Phi(r)(\sinh r)^{n-1}\,dr.
$$
Then we have
\begin{align*}
  \tilde{u}_t & =\alpha e^{\alpha t}\int_\Omega \psi_1u\,d\sigma_\theta+e^{\alpha t}\int_\Omega \psi_1u_t\,d\sigma_\theta \\
   & =\alpha \tilde{u} + e^{\alpha t}\int_\Omega \psi_1\Delta u\,d\sigma_\theta+e^{\alpha t}\int_\Omega e^{\mu t}\psi_1u^p\,d\sigma_\theta \\
   & =\alpha \tilde{u} + e^{\alpha t}\int_\Omega \psi_1\left(u_{rr}+(n-1)\coth r \, u_r+\frac{1}{(\sinh r)^2}\Delta_\theta u\right) \,d\sigma_\theta\\
   &\,\,\,\,\,\,+e^{(\alpha+\mu) t}\int_\Omega \psi_1u^p\,d\sigma_\theta\\
   &=\alpha\tilde{u}+\tilde{u}_{rr}+(n-1)\coth r \,\tilde{u}_r-\frac{\omega_1}{(\sinh r)^2}\tilde{u}+e^{(\alpha+\mu)t}\int_\Omega u^p\psi_1\,d\sigma_\theta,
\end{align*}
where we have integrated by parts twice on $\Omega$ in order to obtain the last equality. Then using Jensen's inequality we obtain
\begin{align*}
  \tilde{u}_t &=\alpha\tilde{u}+\Delta\tilde{u}-\frac{\omega_1}{(\sinh r)^2}\tilde{u}+e^{(\alpha+\mu)t}\int_\Omega u^p\psi_1\,d\sigma_\theta\\
  &\geq \alpha\tilde{u}+\Delta\tilde{u}-\frac{\omega_1}{(\sinh r)^2}\tilde{u}+e^{(\alpha+\mu)t}\left(\int_\Omega u\psi_1\,d\sigma_\theta\right)^p \\
  & =\alpha\tilde{u}+\Delta\tilde{u}-\frac{\omega_1}{(\sinh r)^2}\tilde{u}+e^{\left(\mu-(p-1)\alpha\right)t}\tilde{u}^p.
\end{align*}
Now for arbitrary $0<\varepsilon<R$ we consider a smooth cutoff function $\zeta=\zeta(r)$ such that $\zeta\equiv0$ in $(0,\tfrac{\varepsilon}{2})\cup(2R,\infty)$, $\zeta\equiv1$ in $(\varepsilon,R)$ and $0\leq\zeta\leq1$ in $(0,\infty)$, with
$$0\leq\zeta'\leq\frac{C}{\varepsilon}\quad\text{in }\left(\frac{\epsilon}{2},\epsilon\right),\qquad-\frac{C}{R}\leq\zeta'\leq0\quad\text{in }(R,2R),$$
and
$$|\zeta''|\leq\frac{C}{\varepsilon^2}\quad\text{in }\left(\frac{\epsilon}{2},\epsilon\right),\qquad |\zeta''|\leq\frac{C}{R^2}\quad\text{in }(R,2R),$$
for some $C>0$. Note that $$|\Delta\zeta|\leq\frac{C}{\varepsilon^2}\,\text{ on }\,\left(\frac{\epsilon}{2},\epsilon\right)\quad\text{ and }\quad|\Delta\zeta|\leq\frac{C}{R}\,\text{ on }\,(R,2R).$$
Define for every $t\in(0,T)$
\begin{align*}
G^{\varepsilon,R}(t)&=\int_0^\infty\int_\Omega e^{\alpha t}\zeta(r)u(r,\theta,t)\psi_1(\theta)\Phi(r)(\sinh r)^{n-1}\,d\sigma_\theta dr\\
&=\int_{\tfrac{\varepsilon}{2}}^{2R}\tilde{u}(r,t)\zeta(r)\Phi(r)(\sinh r)^{n-1}\,dr.
\end{align*}
Note that $t\mapsto G^{\varepsilon,R}(t)$ and $t\mapsto G(t)$ are functions of class $C^1$ in $(0,T)$. Using the above inequalities we obtain
\begin{align*}
(G^{\varepsilon,R})'(t)&\geq\int_0^\infty\left(\alpha\tilde{u}+\Delta\tilde{u}-\frac{\omega_1}{(\sinh r)^2}\tilde{u}\right)\zeta\Phi(\sinh r)^{n-1}\,dr\\
&\quad+e^{\left(\mu-(p-1)\alpha\right)t}\int_0^\infty\tilde{u}^p\zeta\Phi(\sinh r)^{n-1}\,dr.
\end{align*}
Note that integrating by parts we have
\begin{align*}
\int_0^\infty\Delta\tilde{u}\,\zeta\Phi(\sinh r)^{n-1}\,dr&=\int_0^\infty\tilde{u}\Delta(\zeta\Phi)(\sinh r)^{n-1}\,dr\\
&=\int_0^\infty\tilde{u}\left(\zeta\Delta\Phi+2\Phi_r\zeta_r+\Phi\Delta\zeta\right)(\sinh r)^{n-1}\,dr
\end{align*}
and hence
\begin{align*}
&(G^{\varepsilon,R})'(t)\geq\int_0^\infty\left(\alpha\Phi+\Delta\Phi-\frac{\omega_1}{(\sinh r)^2}\Phi\right)\tilde{u}\zeta(\sinh r)^{n-1}\,dr\\
&\,\,\,\,\,\,\,\,\,\,+e^{\left(\mu-(p-1)\alpha\right)t}\int_0^\infty\tilde{u}^p\zeta\Phi(\sinh r)^{n-1}\,dr+\int_0^\infty\tilde{u}\left(2\Phi_r\zeta_r+\Phi\Delta\zeta\right)(\sinh r)^{n-1}\,dr.
\end{align*}
Now we need to pass to the limit as $R$ tends to $\infty$ and $\varepsilon$ tends to $0$. Since $u(\cdot,t)$, and hence $\tilde{u}(\cdot,t)$, is bounded in $(0,\infty)$ for every fixed $t\in(0,T)$, by the integrability assumptions on $\Phi$ and its derivatives and by the asymptotic behavior of $\Phi$ close to $0$, we see that
$$
\int_0^\infty\left(\alpha\Phi+\Delta\Phi-\frac{\omega_1}{(\sinh r)^2}\Phi\right)\tilde{u}\zeta(\sinh r)^{n-1}\,dr+e^{\left(\mu-(p-1)\alpha\right)t}\int_0^\infty\tilde{u}^p\zeta\Phi(\sinh r)^{n-1}\,dr
$$
converges by the dominated convergence theorem to
$$
\int_0^\infty\left(\alpha\Phi+\Delta\Phi-\frac{\omega_1}{(\sinh r)^2}\Phi\right)\tilde{u}(\sinh r)^{n-1}\,dr+e^{\left(\mu-(p-1)\alpha\right)t}\int_0^\infty\tilde{u}^p\Phi(\sinh r)^{n-1}\,dr
$$
for every $t\in(0,T)$. Moreover we have
\begin{align*}
&\left|\int_0^\infty\tilde{u}\left(2\Phi_r\zeta_r+\Phi\Delta\zeta\right)(\sinh r)^{n-1}\,dr\right|\\
&\quad\leq\frac{C}{\varepsilon}\int_{\frac{\varepsilon}{2}}^\varepsilon|\Phi_r|(\sinh r)^{n-1}+\frac{C}{\varepsilon^2}\int_{\frac{\varepsilon}{2}}^\varepsilon\Phi(\sinh r)^{n-1}\\
&\quad\quad+\frac{C}{R}\int_{R}^{2R}|\Phi_r|(\sinh r)^{n-1}+\frac{C}{R}\int_{R}^{2R}\Phi(\sinh r)^{n-1}
\end{align*}
and thus the first integral converges to $0$. Now recall that $u(\cdot,t)\in L^\infty(\mathcal{C})$ for any $t\in (0, T)$. Moreover, note that $u_t(\cdot,t)\in L^\infty(\mathcal{C})$ for any $t\in (0, T)$: this can be obtained arguing as in the proof of \cite[Propositions 7.1.10 and 7.3.1]{Lun}. In view of $(ii)$ then we have $\Phi u(\cdot,t),\Phi u_t(\cdot,t)\in L^1(\mathcal{C})$ for every $t\in(0,T)$. Thus by the dominated convergence theorem we have that
\begin{align*}
(G^{\varepsilon,R})'(t)&=\int_0^\infty\int_\Omega (e^{\alpha t}u(r,\theta,t))_t\zeta(r)\psi_1(\theta)\Phi(r)(\sinh r)^{n-1}\,d\sigma_\theta dr\\
&\rightarrow \int_0^\infty\int_\Omega (e^{\alpha t}u_t(r,\theta,t))_t\psi_1(\theta)\Phi(r)(\sinh r)^{n-1}\,d\sigma_\theta dr=G'(t)
\end{align*}
for every $t\in(0,T)$. Then we obtain
\begin{align}
\label{9}G'(t)&\geq\int_0^\infty\left(\alpha\Phi+\Delta\Phi-\frac{\omega_1}{(\sinh r)^2}\Phi\right)\tilde{u}(\sinh r)^{n-1}\,dr\\
\nonumber&\,\,\,\,\,\,+e^{\left(\mu-(p-1)\alpha\right)t}\int_0^\infty\tilde{u}^p\Phi(\sinh r)^{n-1}\,dr
\end{align}
for every $t\in(0,T)$. In view of $(ii)$, by Jensen's inequality
$$
\int_0^\infty\tilde{u}^p\Phi(\sinh r)^{n-1}\,dr\geq\left(\int_0^\infty\tilde{u}\Phi(\sinh r)^{n-1}\,dr\right)^p=(G(t))^p
$$
for every $t\in(0,T)$. This, combined with \eqref{9} and recalling the definition of $\tilde{u}$, yields the conclusion.
\end{proof}

With the same arguments as those appearing in the proof of Proposition \ref{prop1}, where solutions of problem \eqref{7} are considered, one can show the following result, which involves solutions of problem \eqref{7e}.

\begin{proposition}\label{prop1bis}
  Let $u$ be a solution of problem \eqref{7e}, $\alpha\in\mathbb{R}$ and $\Phi\in C^2([0,\infty))$ be a nonnegative function satisfying assumptions (i), (ii), (iii) in Proposition \ref{prop1}. Then the function $G(t)$ defined for $t\in[0,T)$ by \eqref{eq1a} satisfies
\begin{align}\label{eq2abis}
&G'(t)\geq t^qe^{-(p-1)\alpha t}(G(t))^p\\
\nonumber&\,\,\,\,+\int_0^\infty\int_\Omega e^{\alpha t}u(r,\theta,t)\Psi_1(\theta)\left(\Delta\Phi(r)+\alpha\Phi(r)-\frac{\omega_1}{(\sinh r)^2}\Phi(r)\right)(\sinh r)^{n-1}\,d\sigma_\theta dr
\end{align}
for all $t\in(0,T)$.
\end{proposition}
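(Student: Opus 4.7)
The plan is to imitate the proof of Proposition \ref{prop1} line by line, changing only the single step where the reaction term enters. The source term $e^{\mu t}u^p$ becomes $t^q u^p$, and this replaces the factor $e^{(\mu-(p-1)\alpha)t}$ by $t^q e^{-(p-1)\alpha t}$ in the Jensen step, while the rest of the argument (angular integration by parts, radial integration by parts, cutoff and limiting procedure) is unaffected.

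First I would define
\[\tilde u(r,t)=e^{\alpha t}\int_\Omega \psi_1(\theta)u(r,\theta,t)\,d\sigma_\theta,\]
and differentiate. Using the equation in \eqref{7e} and integrating by parts twice on $\Omega$ against the eigenfunction $\psi_1$ (so $\int_\Omega\psi_1\Delta_\theta u\,d\sigma_\theta = -\omega_1\int_\Omega\psi_1 u\,d\sigma_\theta$) gives
\[\tilde u_t = \alpha\tilde u + \Delta\tilde u - \frac{\omega_1}{(\sinh r)^2}\tilde u + t^q e^{\alpha t}\int_\Omega\psi_1 u^p\,d\sigma_\theta.\]
Jensen's inequality in the $\theta$ variable, together with $\|\psi_1\|_{L^1(\Omega)}=1$, bounds the last term from below by
\[t^q e^{\alpha t}\left(\int_\Omega \psi_1 u\,d\sigma_\theta\right)^p = t^q e^{-(p-1)\alpha t}\tilde u^p,\]
which is the only place where the new time-dependent factor appears.

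Next I would introduce the same cutoff $\zeta=\zeta_{\varepsilon,R}(r)$ as in Proposition \ref{prop1}, set
\[G^{\varepsilon,R}(t)=\int_0^\infty \tilde u(r,t)\zeta(r)\Phi(r)(\sinh r)^{n-1}\,dr,\]
multiply the pointwise inequality above by $\zeta\Phi(\sinh r)^{n-1}$ and integrate in $r$. Integration by parts on the term $\int_0^\infty \Delta\tilde u\,\zeta\Phi(\sinh r)^{n-1}\,dr$ transfers $\Delta$ onto $\zeta\Phi$ and produces the three contributions $\zeta\Delta\Phi$, $2\Phi_r\zeta_r$ and $\Phi\Delta\zeta$, with no boundary terms thanks to the asymptotic (i) with $m\geq 2$ and the compact support of $\zeta$ away from $r=0$ and $r=\infty$.

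Finally I would pass to the limit $\varepsilon\to 0^+$ and $R\to\infty$ exactly as in Proposition \ref{prop1}: the error contributions containing $\zeta_r$ and $\Delta\zeta$ vanish by assumption (iii) and the asymptotics (i), while the boundedness of $u(\cdot,t)$ and $u_t(\cdot,t)$ in $L^\infty(\mathcal{C})$ for each $t\in(0,T)$ (obtained as in \cite{Lun}) combined with assumption (ii) lets dominated convergence identify $(G^{\varepsilon,R})'(t)\to G'(t)$ and pass to the limit in the right-hand side. A last application of Jensen's inequality in the $r$ variable, using (ii), yields $\int_0^\infty \tilde u^p\Phi(\sinh r)^{n-1}\,dr\geq (G(t))^p$, which delivers \eqref{eq2abis}. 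The main (and only) point to be careful about is that the prefactor $t^q$ is integrable near $t=0$ exactly because $q>-1$, so the inequality is meaningful throughout $(0,T)$; beyond this, no new obstacle appears relative to Proposition \ref{prop1}.
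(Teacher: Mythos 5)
Your proposal is correct and is exactly the route the paper takes: the paper proves Proposition \ref{prop1bis} by observing that the argument of Proposition \ref{prop1} carries over verbatim, with the only change being that the reaction term $e^{\mu t}u^p$ is replaced by $t^qu^p$, so that after the two applications of Jensen's inequality the prefactor $e^{(\mu-(p-1)\alpha)t}$ becomes $t^qe^{-(p-1)\alpha t}$. Your additional remark about $q>-1$ is not actually needed for the pointwise differential inequality on $(0,T)$ (it matters only later, when integrating the ODE in the proof of Theorem \ref{thm2bis}), but it does no harm.
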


\section{Blow-up: proofs}\label{nonexi}
Let $\Phi_0$ be defined on the hyperbolic space by
$$
\Phi_0(x)\equiv\Phi_0(r(x)):=r^me^{-kr^2}
$$
for $m\geq2$, $k>0$.

\begin{lemma}\label{lemma1}
   Consider a domain $\Omega\subseteq\mathbb{S}^{n-1}$ and let $\omega_1\geq0$ be the smallest Dirichlet eigenvalue of the Laplace-Beltrami operator $\Delta_\theta$ on $\Omega$. Let $$\alpha>\lambda_1(\mathbb{H}^n)$$ and $$m\geq2,\qquad\quad m(m-1)>\omega_1.$$ Then there exists $k_0=k_0(n,m,\alpha,\omega_1)>0$ such that for every $k\in(0,k_0]$ one has
  \begin{equation}\label{6}
  \Delta\Phi_0+\alpha\Phi_0\geq\frac{\omega_1}{(\sinh r)^2}\Phi_0\qquad\text{ in }\mathbb{H}^n.
  \end{equation}
\end{lemma}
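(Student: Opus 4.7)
The plan is to reduce the inequality \eqref{6} to a pointwise lower bound on the single function
\[ F(r) := \frac{\Delta\Phi_0}{\Phi_0}(r) + \alpha - \frac{\omega_1}{\sinh^2 r}, \qquad r>0, \]
and then to exhibit a uniform lower bound for $F$ that becomes nonnegative once $k$ is taken small enough. Since $\Phi_0(r)=r^m e^{-kr^2}$, logarithmic differentiation gives $\Phi_0'/\Phi_0 = m/r - 2kr$, and a direct computation using the radial form of the hyperbolic Laplacian yields
\[ \frac{\Delta\Phi_0}{\Phi_0} = \frac{m(m-1)}{r^2} - 2k(2m+1) + 4k^2 r^2 + \frac{(n-1)m\coth r}{r} - 2k(n-1)\,r\coth r. \]

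The core of the argument rests on three elementary comparisons on $(0,\infty)$: (a) $\coth r \geq 1/r$, giving $(n-1)m\coth r /r \geq (n-1)m/r^2$; (b) $\sinh r \geq r$, giving $\omega_1/\sinh^2 r \leq \omega_1/r^2$; and (c) the identity $r(\coth r - 1) = 2r/(e^{2r}-1)$ together with $e^{2r}\geq 1+2r$ yields $r(\coth r - 1) \leq 1$ for every $r>0$. Using (c) I split
\[ 4k^2 r^2 - 2k(n-1)r\coth r = \bigl[4k^2 r^2 - 2k(n-1)r\bigr] - 2k(n-1)\,r(\coth r -1),\]
complete the square on the first bracket as $\bigl(2kr - (n-1)/2\bigr)^2 - (n-1)^2/4$, and apply (c) to the second bracket to obtain
\[ 4k^2 r^2 - 2k(n-1)r\coth r \geq -\frac{(n-1)^2}{4} - 2k(n-1). \]
Substituting (a), (b) and this bound into the expression for $F$ collapses everything to
\[ F(r) \geq \frac{m(m+n-2) - \omega_1}{r^2} + \left(\alpha - \frac{(n-1)^2}{4}\right) - 2k(2m+n). \]

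To finish, I observe that the first summand is strictly positive on $(0,\infty)$ because $m\geq 2$ together with $m(m-1)>\omega_1$ gives $m(m+n-2) \geq m(m-1) > \omega_1$, while the second is strictly positive by the hypothesis $\alpha > \lambda_1(\mathbb H^n) = (n-1)^2/4$. It therefore suffices to absorb the negative $k$-dependent term: choosing
\[ k_0 := \frac{\alpha - \lambda_1(\mathbb H^n)}{2(2m+n)} \]
makes the lower bound nonnegative for every $k \in (0,k_0]$, which proves \eqref{6}. The one genuinely delicate step is the control of the cross term $4k^2 r^2 - 2k(n-1)r\coth r$, which is negative on an intermediate range of $r$ of size comparable to $1/k$, so that neither a small-$r$ Taylor expansion nor a large-$r$ asymptotic alone will suffice; the clean bound $r(\coth r - 1)\leq 1$ combined with completing the square is exactly what delivers a uniform estimate in $r$, leaving only the constant $(n-1)^2/4$ to be absorbed by the spectral gap hypothesis $\alpha > \lambda_1(\mathbb H^n)$.
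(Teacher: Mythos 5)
Your proof is correct, and it takes a genuinely different route from the paper's. The paper works with the bracketed polynomial expression obtained after factoring out $r^{m-2}e^{-kr^2}$ and argues in two regions: for $r\geq R_0$ it views $4k^2r^2-2(n-1)kr\coth r-(2k(2m+1)-\alpha)$ as a quadratic in $r$ and shows its discriminant is negative (using $\alpha>\tfrac{(n-1)^2}{4}$ to control $\tfrac{(n-1)^2}{4}\coth^2 r$ for large $r$), while for $r\in[0,R_0]$ it absorbs all $k$-dependent terms into the positive gap $m(m-1)-\omega_1$; the resulting $k_0$ is defined implicitly by two smallness conditions and depends on $\omega_1$. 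You instead obtain a single estimate valid uniformly on $(0,\infty)$ by combining the elementary bounds $\coth r\geq 1/r$, $\sinh r\geq r$, and $r(\coth r-1)=\tfrac{2r}{e^{2r}-1}\leq 1$ with a completed square, arriving at
\[
F(r)\geq \frac{m(m+n-2)-\omega_1}{r^2}+\Bigl(\alpha-\frac{(n-1)^2}{4}\Bigr)-2k(2m+n),
\]
which yields an explicit, $\omega_1$-independent threshold $k_0=\tfrac{\alpha-\lambda_1(\mathbb H^n)}{2(2m+n)}$. Each step checks out (in particular the decomposition of the cross term and the sign bookkeeping), and your use of $m(m+n-2)\geq m(m-1)>\omega_1$ is valid since $n\geq 1$. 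What your approach buys is an explicit constant and no case analysis; what the paper's buys is essentially nothing extra here, so yours is arguably cleaner. One cosmetic remark: since you divide by $\Phi_0$, your inequality is established for $r>0$, and the case $r=0$ (where $\Phi_0$ vanishes because $m\geq2$) follows by continuity; you may want to say this explicitly, as the lemma asserts the inequality on all of $\mathbb H^n$.
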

\begin{proof}
  Since $m\geq2$ we have $\Phi_0\in C^2(\mathbb{H}^n)$ and
  \begin{align*}
    (\Phi_0)_r & =(mr^{m-1}-2kr^{m+1})e^{-kr^2} \\
    (\Phi_0)_{rr} & =\left(m(m-1)r^{m-2}-2k(2m+1)r^m+4k^2r^{m+2}\right)e^{-kr^2}
  \end{align*}
  Using polar coordinates $(r,\theta)\in[0,+\infty)\times\mathbb{S}^{n-1}$, a simple computation yields
  \begin{align*}
    & \Delta\Phi_0+\alpha\Phi_0-\frac{\omega_1}{(\sinh r)^2}\Phi_0 \\
    & =(\Phi_0)_{rr}+(n-1)\coth r (\Phi_0)_r+\alpha\Phi_0-\frac{\omega_1}{(\sinh r)^2}\Phi_0\\
 &=r^{m-2}e^{-kr^2}\Big[m(m-1)-2k(2m+1)r^2+4k^2r^4+m(n-1)r\coth r\\
 &\hspace{4,5cm}-2(n-1)kr^3\coth r+\alpha r^2-\frac{\omega_1}{(\sinh r)^2}r^2\Big]
\end{align*}
and the above quantity is nonnegative if and only if
\begin{align}
\label{5}&m(m-1)+m(n-1)r\coth r-\frac{\omega_1}{(\sinh r)^2}r^2\\
\nonumber&\quad+r^2\left[4k^2r^2-2(n-1)kr\coth r-(2k(2m+1)-\alpha)\right]\geq0.
\end{align}
We claim that there exists $R_0=R_0(\alpha)>1$ and $k_0=k_0(n,m,\alpha,\omega_1)>0$ such that
\begin{equation}\label{1}
4k^2r^2-2(n-1)kr\coth r-(2k(2m+1)-\alpha)\geq0
\end{equation}
for every $r\geq R_0$ and $0<k\leq k_0$. Indeed, since $\alpha>\lambda_1(\mathbb{H}^n)=\frac{(n-1)^2}{4}$, we can find $R_0>1$ such that
\begin{equation}\label{2}
\frac{(n-1)^2}{4}(\coth r)^2-\alpha<\frac{1}{2}\left(\frac{(n-1)^2}{4}-\alpha\right)<0
\end{equation}
for every $r\geq R_0$. Now let $k_0=k_0(n,m,\alpha,\omega_1)>0$ be such that for every $0<k\leq k_0$ one has
\begin{align}
\label{3}  & 2k(2m+1)+\frac{1}{2}\left(\frac{(n-1)^2}{4}-\alpha\right)<0,\quad \text{and} \\
\label{4}  & m(m-1)-\omega_1-2k(2m+1)R_0^2-2k(n-1)R_0^3\coth R_0>0;
\end{align}
here we have used our assumptions, $m(m-1)>\omega_1$ and $\alpha>\frac{(n-1)^2}{4}$. Then, for every $k\in(0,k_0]$ and every $r\geq R_0$, we can think of the left-hand side of \eqref{1} as a quadratic polynomial in $r$, having discriminant
\begin{align*}
\frac{\Delta_1}{4}&:=4k^2\left[\frac{(n-1)^2}{4}(\coth r)^2+2k(2m+1)-\alpha\right]\\
&<4k^2\left[\frac{1}{2}\left(\frac{(n-1)^2}{4}-\alpha\right)+2k(2m+1)\right]<0,
\end{align*}
by \eqref{2} and \eqref{3}. Thus \eqref{1} holds for every $k\in(0,k_0]$ and every $r\geq R_0$.

Now we claim that \eqref{5} holds for every $k\in(0,k_0]$ and every $r\geq0$, and thus \eqref{6} holds for every $k\in(0,k_0]$. Indeed, for every $r\geq R_0$ by \eqref{1} we have
\begin{align*}
  & m(m-1)+m(n-1)r\coth r-\frac{\omega_1}{(\sinh r)^2}r^2\\
  &\quad\quad+r^2[4k^2r^2-2(n-1)kr\coth r-(2k(2m+1)-\alpha)]\\
  &\geq m(n-1)r\coth r+m(m-1)-\omega_1>0.
\end{align*}
On the other hand for every $r\in[0,R_0]$ by \eqref{4} we have
\begin{align*}
  & m(m-1)+m(n-1)r\coth r-\frac{\omega_1}{(\sinh r)^2}r^2\\
  &\quad\quad+r^2[4k^2r^2-2(n-1)kr\coth r-(2k(2m+1)-\alpha)]\\
  &\geq m(m-1)-\omega_1-2k(2m+1)R_0^2-2k(n-1)R^3_0\coth R_0>0,
\end{align*}
and thus we conclude.
\end{proof}

\begin{proof}[Proof of Theorem \ref{thm1}]
  We consider the function defined for every $x\in\mathbb{H}^n$ by
  \begin{equation}\label{18}
  \Phi(x)\equiv\Phi(r(x))=C\Phi_0(r)=Cr^me^{-kr^2},
  \end{equation}
  with $C>0$ such that
  $$
  \int_0^\infty Cr^me^{-kr^2}(\sinh r)^{n-1}\,dr=1
  $$
  and with $m\geq2$, $m(m-1)>\omega_1$, $k\in(0,k_0]$ as in Lemma \ref{lemma1}. Note that $\Phi$ satisfies the assumptions of Proposition \ref{prop1}. Let $\alpha=\frac{\mu}{p-1}$, then in view of \eqref{13}, we have  $\alpha>\lambda_1(\mathbb{H}^n)=\frac{(n-1)^2}{4}$. Then by Lemma \ref{lemma1}
  \begin{equation*}
  \Delta\Phi+\alpha\Phi-\frac{\omega_1}{(\sinh r)^2}\Phi\geq0\qquad\text{ in }\mathbb{H}^n.
  \end{equation*}
  Setting for every $r>0$ and $t\in(0,T)$
  \begin{equation}\label{16}
  \tilde{u}(r,t)=e^{\alpha t}\int_\Omega \psi_1(\theta)u(r,\theta,t)\,d\sigma_\theta
  \end{equation}
  and for every $t\in(0,T)$
  \begin{equation}\label{17}
  G(t)=\int_0^\infty\tilde{u}(r,t)\Phi(r)(\sinh r)^{n-1}\,dr,
  \end{equation}
  from Proposition \ref{prop1} we have
  \begin{align}
\label{11} G'(t)&\geq e^{\left(\mu-(p-1)\alpha\right)t}(G(t))^p+\int_0^\infty\left(\Delta\Phi+\alpha\Phi-\frac{\omega_1}{(\sinh r)^2}\Phi\right)\tilde{u}(\sinh r)^{n-1}\,dr\\
\nonumber&\geq (G(t))^p
  \end{align}
  in $(0,T)$. Moreover
  \begin{align}
  \label{12} G(0)&=\int_0^\infty\tilde{u}(r,0)\Phi(r)(\sinh r)^{n-1}\,dr\\
  \nonumber&=\int_0^\infty\int_\Omega u_0(r,\theta)\psi_1(\theta)\Phi(r)(\sinh r)^{n-1}\,d\sigma_\theta dr>0,
  \end{align}
  since $u_0\geq0$, $u_0\not\equiv0$ in $\mathcal{C}$. In view of \eqref{11} and \eqref{12} we see by a standard qualitative analysis of the ODE that $G$ is defined on the maximal domain $(0,T)$, with $T<\infty$, and that
  $$
  G(t)\rightarrow\infty\qquad\text{ as }t\rightarrow T^-.
  $$
  Since $\Phi\in L^1(\mathbb{H}^n)$ we must have
  $$
  \|\tilde{u}(t)\|_{L^\infty((0,\infty))}\rightarrow\infty\qquad\text{ as }t\rightarrow T^-,
  $$
  and thus in turn
  $$
  \|u(t)\|_{L^\infty(\mathcal{C})}\rightarrow\infty\qquad\text{ as }t\rightarrow T^-.
  $$
  By direct integration of the differential inequality $G'\geq G^p$ one finally gets
  $$
  T\leq\frac{(G(0))^{1-p}}{p-1},
  $$
with $G(0)$ as in \eqref{12}.
\end{proof}

\begin{proof}[Proof of Theorem \ref{thm2}]
  Let $\alpha>\lambda_1(\mathbb{H}^n)$, $m\geq2$, $m(m-1)>\omega_1$, then $(p-1)\alpha-\mu>0$ and by Lemma \ref{lemma1} formula \eqref{6} holds for every $k>0$ sufficiently small. Arguing as in the proof of Theorem \ref{thm1}, we define $\Phi$, $\tilde{u}$ and $G$  as in formulas \eqref{18}, \eqref{16} and \eqref{17} respectively. By Proposition \ref{prop1} and Lemma \ref{lemma1} we deduce
  \begin{equation}\label{19}
  G'(t)\geq e^{-((p-1)\alpha-\mu)t}(G(t))^p
  \end{equation}
  for every $t\in(0,T)$, with
  \begin{align*}
  G(0)&=\int_0^\infty\int_\Omega u_0(r,\theta)\psi_1(\theta)\Phi(r)(\sinh r)^{n-1}\,d\sigma_\theta dr\\
   &=C\int_0^\infty\int_\Omega u_0(r,\theta)\psi_1(\theta)r^me^{-kr^2}(\sinh r)^{n-1}\,d\sigma_\theta dr>0,
  \end{align*}
  and
  $$
  C=\left(\int_0^\infty r^me^{-kr^2}(\sinh r)^{n-1}\,dr\right)^{-1}.
  $$
  Assume that \eqref{14} holds, then $G(0)>\left(\alpha-\frac{\mu}{p-1}\right)^{\frac{1}{p-1}}$. By direct integration of \eqref{19} we obtain
  $$
  G(t)\geq\left((G(0))^{1-p}+\frac{p-1}{(p-1)\alpha-\mu}(e^{-((p-1)\alpha-\mu)t}-1)\right)^{-\frac{1}{p-1}}
  $$
  for every $t\in(0,T)$. Then we must have
  \begin{equation}\label{20}
  0<T\leq T^*:=-\frac{1}{(p-1)\alpha-\mu}\log\left(1-\left(\alpha-\frac{\mu}{p-1}\right)(G(0))^{1-p}\right),
  \end{equation}
  with $T^*\in(0,\infty)$, and $$\lim_{t\rightarrow T^-}G(t)=+\infty,$$ and hence
  $$\lim_{t\rightarrow T^-}\|u(t)\|_{L^\infty(\mathcal{C})}=+\infty.$$
\end{proof}

\begin{proof}[Proof of Theorem \ref{thm2bis}] Arguing as in the proof of Theorem \ref{thm2}, but using Proposition \ref{prop1bis} instead of Proposition \ref{prop1}, we see that the function $G(t)$ defined in \eqref{eq1a} now satisfies
\begin{equation}\label{22}
  G'(t)\geq t^qe^{-(p-1)\alpha t}(G(t))^p
  \end{equation}
  for every $t\in(0,T)$, with $\alpha>\lambda_1(\mathbb{H}^n)$, and
  \begin{align*}
  G(0)&=C\int_0^\infty\int_\Omega u_0(r,\theta)\psi_1(\theta)r^me^{-kr^2}(\sinh r)^{n-1}\,d\sigma_\theta dr>0,
  \end{align*}
  with
  $$
  C=\left(\int_0^\infty r^me^{-kr^2}(\sinh r)^{n-1}\,dr\right)^{-1}.
  $$
  Now let $$H(t)=\int_0^ts^qe^{-(p-1)\alpha s}\,ds$$
  for $t\in(0,\infty]$ and observe that $H(t)$ is strictly increasing, with $H(\infty)\in\mathbb{R}$. By direct integration of \eqref{22} we see that we must have
  $$
  G(t)\geq\left((G(0))^{1-p}-(p-1)H(t)\right)^{-\frac{1}{p-1}}
  $$
  in $(0,T)$. If $$G(0)>((p-1)H(\infty))^{-\frac{1}{p-1}}$$
  then $G(t)$ must blow up in finite time, with
  $$
  0<T\leq T^*=H^{-1}\left(\frac{(G(0))^{1-p}}{p-1}\right),
  $$
  and hence $$\lim_{t\rightarrow T^-}\|u(t)\|_{L^\infty(\mathcal{C})}=+\infty.$$.
 \end{proof}

\section{Global existence: proofs}\label{exi}
\begin{lemma}\label{lemma2}
  If $p>1$ and $$\mu\leq(p-1)\lambda_1(\mathbb{H}^n)$$ then there exists a supersolution $v$ of
  $$v_t-\Delta v=e^{\mu t}v^p\qquad\text{ in }\mathbb{H}^n\times(0,\infty)$$ such that $v\in C^{2,1}_{x,t}(\mathbb{H}^n\times(0,\infty))\cap C^0(\mathbb{H}^n\times[0,\infty))\cap L^\infty(\mathbb{H}^n\times(0,\infty))$ and $v>0$ in $\mathbb{H}^n\times[0,\infty)$.
\end{lemma}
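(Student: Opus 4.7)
The plan is to build $v$ by separation of variables, using the existence on $\mathbb{H}^n$ of a positive, bounded, smooth radial function $\phi_0$ with $-\Delta \phi_0 = \lambda_1(\mathbb{H}^n)\phi_0$ (a classical construction, coming from the spherical ground state of $-\Delta$ on $\mathbb{H}^n$), normalized so that $0 < \phi_0(x) \leq \phi_0(o) = 1$ for every $x \in \mathbb{H}^n$. I look for $v$ in the separated form
\[ v(x,t) = \eta(t)\,\phi_0(x), \]
with $\eta \in C^1([0,\infty))$ positive and bounded; this automatically gives $v > 0$ on $\mathbb{H}^n \times [0,\infty)$, $v \in C^{2,1}_{x,t}\cap C^0 \cap L^\infty$, so all the qualitative properties listed in the statement reduce to properties of $\eta$.

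A direct computation using $-\Delta \phi_0 = \lambda_1(\mathbb{H}^n)\phi_0$ and the pointwise bound $\phi_0^p \leq \phi_0$ shows that $v$ is a supersolution provided $\eta$ satisfies
\[ \eta'(t) + \lambda_1(\mathbb{H}^n)\,\eta(t) \geq e^{\mu t}\,\eta(t)^p \quad \text{on } (0,\infty). \]
The substitution $\eta(t) = e^{-\lambda_1(\mathbb{H}^n)t}\,w(t)$ kills the linear term and reduces matters to $w'(t) \geq e^{-\alpha t}\,w(t)^p$ with $\alpha := (p-1)\lambda_1(\mathbb{H}^n) - \mu \geq 0$. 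When $\alpha > 0$, that is, in the strict inequality case of \eqref{10}, I solve the ODE $w' = e^{-\alpha t}w^p$ explicitly by separation of variables, which yields
\[ w(t)^{1-p} = w(0)^{1-p} - (p-1)\,\frac{1-e^{-\alpha t}}{\alpha}, \]
and choosing the initial value $w(0) > 0$ so small that $w(0)^{p-1} < \alpha/(p-1)$ ensures that the right-hand side stays uniformly positive. Thus $w$ is globally defined, positive, increasing and bounded above, the associated $\eta = e^{-\lambda_1(\mathbb{H}^n)t}w$ is positive and bounded, and $v = \eta\,\phi_0$ does the job.

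The main obstacle is the threshold case $\mu = (p-1)\lambda_1(\mathbb{H}^n)$, where $\alpha = 0$ and the reduced ODE $w' = w^p$ blows up in finite time, so the separated ansatz cannot by itself be extended globally. To cover this case I would approximate $\mu$ from below: for $\mu_n \uparrow (p-1)\lambda_1(\mathbb{H}^n)$ the construction above produces supersolutions $v_n$ of the corresponding equations sharing a common positive pointwise lower bound at $t=0$, and interior parabolic estimates together with the monotone dependence of the construction on $\mu_n$ allow one to pass to a pointwise limit $v$ that is still a positive bounded supersolution of the threshold equation and retains the required regularity. An alternative route would be a direct Duhamel/Picard fixed point argument starting from the initial datum $\epsilon\phi_0$, exploiting the identity $e^{t\Delta}\phi_0 = e^{-\lambda_1(\mathbb{H}^n)t}\phi_0$ to estimate the linear contribution and the decay of the heat semigroup to control the nonlinear integral; in either approach, the delicate point is to maintain a uniform global-in-time bound on $v$ precisely at the critical value of $\mu$, which is where the exponential gain in the reduced ODE disappears.
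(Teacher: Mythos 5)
Your construction for the strict case $\mu<(p-1)\lambda_1(\mathbb{H}^n)$ is correct and is essentially the construction the paper points to (Subsection 5.2 of \cite{BaPoTe}): a separated supersolution $\eta(t)\varphi_0(x)$ built on the bounded positive radial ground-state spherical function, with the crude bound $\varphi_0^p\le\varphi_0$ reducing everything to the scalar inequality $w'\ge e^{-\alpha t}w^p$, which is globally solvable precisely because $\alpha>0$. Note that the paper does not reprove this lemma at all; it delegates all cases to \cite{BaPoTe} and \cite{WaYi}.

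The genuine gap is the threshold case $\mu=(p-1)\lambda_1(\mathbb{H}^n)$, which you only sketch, and the sketch as written fails. First, a supersolution of $v_t-\Delta v=e^{\mu_n t}v^p$ with $\mu_n<\mu$ is \emph{not} a supersolution of the equation with exponent $\mu$: since $e^{\mu_n t}v^p\le e^{\mu t}v^p$, being a supersolution for the larger right-hand side is the \emph{stronger} requirement, so nothing is gained by approximating $\mu$ from below, and a pointwise limit of the $v_n$ need not be a supersolution of the limit equation. Second, even setting that aside, your own smallness condition $w_n(0)^{p-1}<\alpha_n/(p-1)$ with $\alpha_n=(p-1)\lambda_1(\mathbb{H}^n)-\mu_n\to0$ forces $w_n(0)\to0$, so the family $v_n$ does \emph{not} share a common positive lower bound at $t=0$ and the limit degenerates to $v\equiv0$. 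This is not a technicality: at the threshold the exponential gain in the reduced ODE disappears, and the known constructions (Subsection 5.3 of \cite{BaPoTe} for $\mu>\tfrac23\lambda_1(\mathbb{H}^n)$ and the proof of Theorem 1 in \cite{WaYi} for $0<\mu\le\tfrac23\lambda_1(\mathbb{H}^n)$) must exploit the precise spatial decay $\varphi_0(r)\sim c\,(1+r)e^{-\frac{n-1}{2}r}$, i.e.\ the fact that $\varphi_0^{p-1}$ tends to zero at infinity with a polynomial rate, rather than the lossy pointwise bound $\varphi_0^p\le\varphi_0$ you use; this requires a non-separated (or at least more refined) ansatz and is exactly the part of the lemma that carries the content. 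Your alternative Duhamel route runs into the same difficulty (the critical balance of exponents is where the fixed-point estimate must be done carefully), and you do not carry it out. As it stands, your argument proves the lemma only for $\mu<(p-1)\lambda_1(\mathbb{H}^n)$.
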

\begin{proof}
  The construction of the supersolution $v$ in case $\mu<(p-1)\lambda_1(\mathbb{H}^n)$ can be found in Subsection 5.2 in  \cite{BaPoTe}. The case $\mu=(p-1)\lambda_1(\mathbb{H}^n)>0$ with $\mu>\frac{2}{3}\lambda_1(\mathbb{H}^n)$ is considered in Subsection 5.3 in \cite{BaPoTe}. Finally, the case $\mu=(p-1)\lambda_1(\mathbb{H}^n)>0$ with $0<\mu\leq\frac{2}{3}\lambda_1(\mathbb{H}^n)$ is detailed in the proof of Theorem 1 in \cite{WaYi}.
\end{proof}

\begin{lemma}\label{lemma3}
  Let $\Omega\subseteq\mathbb{S}^{n-1}$ and $\mathcal{C}=(0,\infty)\times\Omega$, then for every $x_0\in\partial\mathcal{C}$ there exists a local barrier function at $x_0$ for the operator $-\Delta$. That is, if we define $\mathcal{U}:=B_\delta(x_0)\cap\mathcal{C}$, there exist $\delta>0$ and $h\in C^2(\mathcal{U})\cap C^0(\overline{\mathcal{U}})$ such that $h>0$ in $\overline{\mathcal{U}}\setminus\{x_0\}$, $h(x_0)=0$ and $\Delta h\leq-1$ in $\mathcal{U}$.
\end{lemma}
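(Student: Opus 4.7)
The plan is to split the argument into two cases according to whether the boundary point is the vertex $o$ or a regular point of $\partial\mathcal{C}$, and to build a tailored barrier for each.

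For $x_0 \in \partial\mathcal{C}$ with $r(x_0) > 0$, the hypersurface $\partial\mathcal{C}$ is smooth near $x_0$ (under the standard implicit regularity of $\partial\Omega$), so an exterior ball condition in $\mathbb{H}^n$ holds: one finds $y_0 \in \mathbb{H}^n \setminus \overline{\mathcal{C}}$ and $\rho > 0$ with $B_\rho(y_0) \subset \mathbb{H}^n \setminus \overline{\mathcal{C}}$ tangent to $\partial\mathcal{C}$ at $x_0$. Setting $d(x) = \mathrm{dist}_{\mathbb{H}^n}(y_0, x)$ and exploiting the familiar identities $|\nabla d| = 1$ and $\Delta d = (n-1)\coth d$, I would try the exponential barrier
\[h(x) = 1 - e^{-\lambda(d(x) - \rho)}.\]
A direct computation gives $\Delta h = \lambda e^{-\lambda(d-\rho)}\bigl[(n-1)\coth d - \lambda\bigr]$; picking $\lambda$ large (for example larger than $(n-1)\coth\rho + 1$) and then $\delta > 0$ small forces $\Delta h \leq -1$ on $\mathcal{U} = B_\delta(x_0) \cap \mathcal{C}$, while the exterior-ball property gives $d > \rho$ on $\overline{\mathcal{U}} \setminus \{x_0\}$, so $h > 0$ there.

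When $x_0 = o$ (which belongs to $\partial\mathcal{C}$ only if $\Omega \subsetneq \mathbb{S}^{n-1}$), the exterior ball approach collapses and I would instead resort to an angular eigenfunction trick. Choose an open set $\Omega'$ with $\overline{\Omega} \subset \Omega' \subsetneq \mathbb{S}^{n-1}$, let $\omega_1' > 0$ be the first Dirichlet eigenvalue of $-\Delta_\theta$ on $\Omega'$, and let $\phi$ be its positive eigenfunction; by compact containment, $\phi \geq c_0 > 0$ on $\overline{\Omega}$. Fix $\beta \in (0,2)$ with $\beta(\beta + n - 2) < \omega_1'$ (possible because $\omega_1' > 0$) and define
\[h(x) = r(x)^\beta \phi(\theta).\]
Then $h(o) = 0$ and $h \geq c_0 r^\beta > 0$ on $\overline{\mathcal{U}} \setminus \{o\}$. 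Using the polar expression of $\Delta$ on $\mathbb{H}^n$ together with $\coth r = r^{-1} + O(r)$ and $\sinh^{-2} r = r^{-2} + O(1)$ as $r \to 0$, one finds
\[\Delta h = r^{\beta - 2}\phi(\theta)\bigl[\beta(\beta + n - 2) - \omega_1' + O(r^2)\bigr],\]
whose leading bracket is a strictly negative constant; since $r^{\beta-2} \to +\infty$ as $r \to 0$, the right-hand side diverges to $-\infty$, and $\Delta h \leq -1$ on $\mathcal{U} = B_\delta(o) \cap \mathcal{C}$ for $\delta$ sufficiently small.

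The real obstacle is the vertex: no exterior ball is available at $o$, and no purely radial barrier can work, since for any smooth $h$ with $h(o) = 0$ the term $(n-1)\coth r \, h_r$ forces $\Delta h \to +\infty$ if $h_r(0) \neq 0$, while $h \sim C r^2$ with $C>0$ forces $\Delta h(o) > 0$. The resolution is to inject angular information through the first Dirichlet eigenfunction of a slight enlargement $\Omega'$ of $\Omega$, producing the strict gap $\omega_1' - \beta(\beta + n - 2) > 0$ that drives $\Delta h$ to $-\infty$. Verifying that the hyperbolic curvature corrections encoded in $\coth r$ and $\sinh^{-2} r$ are merely lower-order perturbations of the Euclidean leading term is routine once the expansions near $r = 0$ are substituted.
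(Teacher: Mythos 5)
Your construction is correct, and at the vertex it is essentially the very construction the paper invokes: the paper's proof is a one-line citation of Miller's barriers on cones, whose key device is exactly the function $r^\beta\phi(\theta)$ built from the first Dirichlet eigenfunction $\phi$ of an enlarged spherical domain $\Omega'\supset\overline{\Omega}$ with $\beta(\beta+n-2)<\omega_1'$; your check that the hyperbolic corrections $r\coth r=1+O(r^2)$ and $r^2(\sinh r)^{-2}=1+O(r^2)$ only perturb the bracket at order $r^2$, so that $\Delta h\sim r^{\beta-2}\phi\,[\beta(\beta+n-2)-\omega_1']\to-\infty$, is precisely the content of ``arguing as in Miller'' on $\mathbb{H}^n$, and your remark that no radial barrier can work at $o$ correctly explains why the angular eigenfunction is unavoidable there. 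Where you genuinely diverge from the paper is at the lateral boundary points $r(x_0)>0$: the paper again relies on Miller's barriers, which need only an exterior \emph{cone} condition, whereas you use an exterior \emph{ball} together with the exponential barrier $1-e^{-\lambda(d-\rho)}$ (your computation via $|\nabla d|=1$ and $\Delta d=(n-1)\coth d$ is fine). Your route therefore requires $\partial\Omega$ to be smooth (say $C^{1,1}$) near $\theta_0$, an assumption not present in the statement, which allows an arbitrary relatively open $\Omega$; the cone-condition route covers, for instance, spherical domains $\Omega$ with corners. The gap is easily closed: at a lateral point the complement of $\mathcal{C}$ contains a cone with vertex $x_0$ whenever it does so for $\Omega$, and the same eigenfunction barrier, written in geodesic polar coordinates centred at $x_0$, applies there verbatim. (To be fair, both your argument and the paper's implicitly assume $\Omega$ is regular enough for an exterior cone to exist at every boundary point; for genuinely pathological $\Omega$ the lemma can fail, but that is an imprecision of the statement, not of your proof.)
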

\begin{proof}
  The proof of this result can be obtained arguing as in Theorem 2 and in the remarks following Theorem 3 in \cite{Mil}, since for every $x_0\in\mathcal{C}$ the exterior cone condition at $x_0$ holds.
\end{proof}

\begin{lemma}\label{lemma4}
Let $\Omega\subseteq\mathbb{S}^{n-1}$ and $\mathcal{C}=(0,\infty)\times\Omega$, then for every $x_0\in\partial\mathcal{C}$, $t_0\geq0$ there exists a local barrier function at $(x_0,t_0)$ for the operator $\partial_t-\Delta$. That is, if we define $\mathcal{V}:=[B_\delta(x_0)\times(t_0-\delta,t_0+\delta)]\cap[\mathcal{C}\times[0,\infty)]$, there exist $\delta>0$ and $\zeta\in C^2(\mathcal{V})\cap C^0(\overline{\mathcal{V}})$ such that $\zeta>0$ in $\overline{\mathcal{V}}\setminus\{(x_0,t_0)\}$, $\zeta(x_0,t_0)=0$ and $\zeta_t-\Delta \zeta\geq1$ in $\mathcal{V}$.
\end{lemma}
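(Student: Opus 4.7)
The plan is to upgrade the elliptic barrier produced by Lemma \ref{lemma3} into a parabolic one by adding a convex, nonnegative correction in the time variable that vanishes only at $t=t_0$. Concretely, given $x_0\in\partial\mathcal{C}$, I would first invoke Lemma \ref{lemma3} to obtain some $\delta_0>0$ and a function $h\in C^2(\mathcal{U}_0)\cap C^0(\overline{\mathcal{U}_0})$, with $\mathcal{U}_0:=B_{\delta_0}(x_0)\cap\mathcal{C}$, satisfying $h(x_0)=0$, $h>0$ on $\overline{\mathcal{U}_0}\setminus\{x_0\}$, and $\Delta h\leq -1$ in $\mathcal{U}_0$. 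For constants $\delta\in(0,\delta_0]$ and $C>0$ to be fixed below, I would then set
\[
\zeta(x,t):=Ch(x)+(t-t_0)^2,\qquad (x,t)\in\overline{\mathcal{V}}.
\]

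The regularity $\zeta\in C^2(\mathcal{V})\cap C^0(\overline{\mathcal{V}})$ is immediate, and $\zeta\geq 0$ with equality if and only if $h(x)=0$ and $t=t_0$, i.e.\ only at $(x_0,t_0)$; this gives the first two requirements at once. For the parabolic inequality, a direct computation yields
\[
\zeta_t-\Delta\zeta=2(t-t_0)-C\Delta h\geq 2(t-t_0)+C\geq C-2\delta\qquad\text{in }\mathcal{V},
\]
where I have used $-\Delta h\geq 1$ and $|t-t_0|<\delta$. It therefore suffices to fix any $\delta\in(0,\delta_0]$ and to choose $C\geq 1+2\delta$, thereby concluding $\zeta_t-\Delta\zeta\geq 1$ throughout $\mathcal{V}$.

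No step should present a genuine obstacle here: the only mildly delicate point is the possibly negative contribution $2(t-t_0)$ when $t<t_0$, but this is precisely absorbed by the constant $C$ thanks to the quantitative estimate $\Delta h\leq -1$ already supplied by Lemma \ref{lemma3}. In other words, the real work has already been done at the elliptic level, and Lemma \ref{lemma4} reduces to this short, explicit patching of a time-quadratic term onto the spatial barrier.
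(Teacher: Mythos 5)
Your proposal is correct and is exactly the paper's argument: the paper also takes $\zeta(x,t)=Ch(x)+(t-t_0)^2$ with $h$ the elliptic barrier from Lemma \ref{lemma3} and $C>0$ large enough. You have merely spelled out the verification (the choice $C\geq 1+2\delta$ absorbing the term $2(t-t_0)$) that the paper leaves implicit.
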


\begin{proof}
  Let $h$ be a local barrier function at $x_0$ for the operator $-\Delta$, as given by Lemma \ref{lemma3}. Then the function $$\zeta(x,t)=Ch(x)+(t-t_0)^2$$ satisfies all the desired properties, for $C>0$ large enough.
\end{proof}

\begin{proof}[Proof of Theorem \ref{thm3}]
  For every $j\in\mathbb{N}$ let $D_j\subset\mathcal{C}$ be a bounded open domain with boundary of class $C^1$, such that $\overline{D}_j\subset D_{j+1}$ for every $j\in\mathbb{N}$ and $\cup_{j\in\mathbb{N}}D_j=\mathcal{C}$. For every $j\in\mathbb{N}$ let $\eta_j\in C^{\infty}_0(D_j)$ be such that $0\leq\eta_j\leq1$ in $D_j$ and $\eta_j=1$ in $D_{j-1}$.

 Let $v$ be the supersolution of \eqref{15} given by Lemma \ref{lemma2} and assume
 $$0\leq u_0(x)\leq v(x,0)\qquad\text{ for every }x\in\mathcal{C}.$$
 By classical results on uniformly parabolic operators (see e.g. \cite[Cap. 7]{Fri}, \cite[Cap. 14]{LaSoUr}) for every $j\in\mathbb{N}$ there exists a unique classical solution $u_j$ of
  \begin{equation*}
    \begin{cases}
      \partial_tu_j-\Delta u_j=e^{\mu t}u_j^p, & \mbox{in } D_j\times(0,T), \\
      u_j=0, & \mbox{in } \partial D_j\times(0,T), \\
      u_j=\eta_ju_0, & \mbox{on }D_j\times\{0\},
    \end{cases}
  \end{equation*}
  for some maximal existence time $T>0$, which may depend on $j$. Since $$0\leq\eta_ju_0\leq u_0\leq v(\cdot,0)\qquad\text{ in }D_j,$$ by standard parabolic comparison principles for every $j\in\mathbb{N}$ we have
  \begin{equation}\label{21}
    0\leq u_j\leq v\qquad\text{ in }D_j\times(0,T).
  \end{equation}
  Since $v\in L^\infty(\mathbb{H}^n\times(0,\infty))$ we must have $T=\infty$. By standard a priori estimates (see e.g. \cite[Cap. 7]{Fri}, \cite[Cap. 14]{LaSoUr}), up to taking subsequences, we have that for every $K\subset\mathcal{C}$ compact and every $\tau>\varepsilon>0$ the sequence $u_j$ converges in $C^{2,1}_{x,t}(K\times[\varepsilon,\tau])$ to a function $u\in C^{2,1}_{x,t}(\mathcal{C}\times(0,\infty))$, which satisfies
  $$
  u_t-\Delta u=e^{\mu t}u^p\qquad\text{ in }\mathcal{C}\times(0,\infty).
  $$
  By \eqref{21} we have $$0\leq u\leq v \quad \text{ in } \, \mathcal{C}\times(0,\infty),$$ and hence $u\in L^\infty(\mathcal{C}\times(0,\infty))$. Moreover by Lemma \ref{lemma4}, by a standard parabolic local barrier argument, we see that $u$ attains continuously the initial and the boundary data
  \begin{align*}
    u(x,0)&=u_0(x) \qquad \textrm{ in }\mathcal{C}, \\
    u(x,t)&=0 \qquad\qquad \textrm{ in }\partial\mathcal{C}\times(0,\infty).
  \end{align*}
  Thus $u$ is a bounded classical solution of problem \eqref{7}.
\end{proof}

\begin{proof}[Proof of Theorem \ref{thm3bis}] The proof follows along the same lines as that of Theorem \ref{thm3}. Here the only difference is that, instead of using Lemma \ref{lemma2}, one has to consider a supersolution $v$ of
  $$v_t-\Delta v=t^qv^p\qquad\text{ in }\mathbb{H}^n\times(0,\infty)$$ such that $v\in C^{2,1}_{x,t}(\mathbb{H}^n\times(0,\infty))\cap C^0(\mathbb{H}^n\times[0,\infty))\cap L^\infty(\mathbb{H}^n\times(0,\infty))$ and $v>0$ in $\mathbb{H}^n\times[0,\infty)$. The construction of a supersolution with the desired properties can be found in Subsection 5.2 in  \cite{BaPoTe}.
\end{proof}

\

\

\

\textbf{Acknowledgements.} The authors are members of the {\em GNAMPA, Gruppo Nazionale per l'Analisi Matematica, la Probabilità e le loro Applicazioni} of INdAM.

\bibliographystyle{plain}

\end{document}